\newtheorem{theorem}{Theorem}[section]
\newtheorem{prop}[theorem]{Proposition}
\newtheorem{cor}[theorem]{Corollary}
\newtheorem{lemma}[theorem]{Lemma}
\newtheorem{remark}[theorem]{Remark}
\newtheorem{define}[theorem]{Definition}
\newtheorem{example}[theorem]{Example}
\newtheorem{assumption}[theorem]{Assumption}
\def\citet{\cite}
\newcommand{\comment}[1]{}
\newcommand{\RR}{{\mathbb R}}
\newcommand{\CC}{{\mathbb C}}
\begin{document}

\title[Minimizing Rational Functions by Exact Jacobian SDP Relaxation]
{Minimizing Rational Functions by Exact Jacobian SDP Relaxation
Applicable to Finite Singularities}
\author{Feng Guo}
\address{
Department of Mathematics, University of California, San Diego,
9500 Gilman Drive, La Jolla, CA 92093.}
\email{f1guo@math.ucsd.edu}
\author{Li Wang}
\address{
Department of Mathematics, University of California, San Diego
9500 Gilman Drive, La Jolla, CA 92093.} 
\email{liw022@math.ucsd.edu}
\author{Guangming Zhou}
\address{
School of Mathematics and Computing Science, Xiangtan University, Hunan 411105, P. R. China.} 
\email{zhougm@xtu.edu.cn}

\begin{abstract}
This paper considers the optimization problem of minimizing a rational
function. We reformulate this problem as polynomial optimization by the
technique of homogenization.  These two problems are shown to be equivalent
under some generic conditions. The exact Jacobian SDP relaxation method
proposed by Nie is used to solve the resulting polynomial
optimization. We also prove that the assumption of nonsingularity in
Nie's method can be weakened as the finiteness of singularities.
Some numerical examples are given to illustrate the efficiency of our method.
\end{abstract}

\maketitle

\section{Introduction}\label{sec::intro}

Consider the problem of minimizing a rational function  
\begin{equation}\label{pro::rf}
\left\{
\begin{aligned}
r^*:=\underset{x\in\RR^n}{\min}&\ {\frac{p(x)}{q(x)}}\\
\text{s.t.}&\ h_1(x)=\cdots=h_{m_1}(x)=0,\\
&\ g_1(x)\ge 0,\ldots,g_{m_2}(x)\ge 0.
\end{aligned}\right.
\end{equation}
where $p(x),q(x),h_i(x),g_j(x)\in\RR[x]:=\RR[x_1,\ldots,x_n]$. As a special
case, when $\deg{(q)}=0$, (\ref{pro::rf}) becomes a multivariate polynomial
optimization which is NP-hard even when $p(x)$ is a nonconvex quadratic
polynomial and $h_i(x)$'s, $g_j(x)$'s are linear \cite{PardalosVavasis1991}.

Some approaches using sum-of-squares relaxation to solve the minimization of
(\ref{pro::rf}) are proposed in \cite{JibeteanKlerkRF,Nie2008} and the core
idea therein is in the following. Let $S$ be the feasible set of
(\ref{pro::rf}). Suppose that $r^*>-\infty$ and $q(x)$ is nonnegative on $S$ 
(otherwise replace $\frac{p(x)}{q(x)}$ by $\frac{p(x)q(x)}{q^2(x)}$),
then $\gamma\in\RR$ is a lower bound of $r^*$ if and only if $p(x)-\gamma
q(x)\ge 0$ on $S$. Thus the problem (\ref{pro::rf}) can be reformulated as
maximizing $\gamma$ such that $p(x)-\gamma q(x)$ is nonnegative on $S$, which
is related to the representation of a nonnegative polynomial on a feasible set
defined by several polynomial equalities and inequalities.  As is well-known, a
univariate polynomial is nonnegative on $\RR$ if and only if it can be
represented as a sum-of-squares of polynomials (SOS)
\cite{Reznick96someconcrete} which can be efficiently determined by solving a
semidefinite program (SDP) \cite{Parrilo2003,parriloSturmfels}.  However, when
$n>1$, due to the fact that a nonnegative multivariate polynomial might not be
an SOS \cite{Reznick96someconcrete}, the problem (\ref{pro::rf}) becomes very
hard even if there are no constraints.  Denote by $M(S)$ the set of polynomials
which can be written as 
\[
\sum\limits_{i=1}^{m_1}\varphi_i(x)h_i(x)+\sum\limits_{j=0}^{m_2}\sigma_j(x)g_j(x)
\] 
where $\varphi_i(x)\in\RR[x]$, $g_0(x)=1$ and $\sigma_j(x)$'s are SOS.  $M(S)$
is called the {\itshape quadratic module} generated by the defining polynomials
of $S$.  If $M(S)$ is {\itshape archimedean}, which means that there exists
$R>0$ such that $R-||x||^2_2\in M(S)$, then Putinar's Positivstellensatz
\cite{Putinar1993} states that if a polynomial $f(x)$ is positive on $S$, then
it belongs to $M(S)$. If $S$ is compact, then Schm{\"{u}}dgen's
Positivstellensatz \cite{Schmugen1991} states that if a polynomial $f(x)$ is
positive on $S$, then it can be represented as
\begin{equation}\label{eq::preordering}
f(x)=\sum\limits_{i=1}^{m_1}\varphi_i(x)h_i(x)+
\sum\limits_{\nu\in\{0,1\}^{m_2}}\sigma_{\nu}(x)g_{\nu}(x),
\end{equation}
where $\varphi_i(x)\in\RR[x]$, $g_{\nu}(x)=g_1^{\nu_1}\cdots
g_{m_2}^{\nu_{m_2}}$ and $\sigma_{\nu}(x)$'s are SOS. The set of
polynomials which have representation (\ref{eq::preordering}) is called
{\itshape preordering} which we denote by $P(S)$.  Hence, if $S$ in
(\ref{pro::rf}) is archimedean or compact, we can apply Putinar's
Positivstellensatz or Schm{\"{u}}dgen's Positivstellensatz to maximize $\gamma$
such that $p(x)-\gamma q(x)$ belongs to $M(S)$ or $P(S)$. 

In this paper, we present a different way to obtain the minimum $r^*$. Given a
polynomial $f\in\RR[x]$, let $\tilde{x}=(x_0,x_1,\ldots,x_n)\in\RR^{n+1}$ and
$f^{hom}$ be the homogenization of $f$, i.e.
$f^{hom}(\tilde{x})=x_0^{\deg{(f)}}f(x/x_0)$.  We reformulate the minimization
of (\ref{pro::rf}) by the technique of homogenization as the following
polynomial optimization 
\begin{equation}\label{pro::homo}
\left\{
\begin{aligned}
s^*:=\underset{\tilde{x}\in\RR^{n+1}}{\min}&\ \tilde{p}(\tilde{x})\\
\text{s.t.}&\ h^{hom}_1(\tilde{x})=\cdots=h^{hom}_{m_1}(\tilde{x})=0,\ 
\tilde{q}(\tilde{x})=1,\\
&\ g^{hom}_1(\tilde{x})\ge 0,\ldots,g^{hom}_{m_2}(\tilde{x})\ge 0,\ 
x_0\ge 0,\\
\end{aligned}\right.
\end{equation}
where $\tilde{p}(\tilde{x}):=x_0^{\max\{\deg{(p)},\deg{(q)}\}}p(x/x_0)$ and
$\tilde{q}(\tilde{x}):=x_0^{\max\{\deg{(p)},\deg{(q)}\}}q(x/x_0)$. We show that
these two problems are equivalent under some generic conditions. As a special
case, they are always equivalent if there are no constraints in
(\ref{pro::rf}). The relations between the achievabilities of $r^*$ and $s^*$
are discussed. 

Therefore, the problem of solving (\ref{pro::rf}) becomes to efficiently
solving problem (\ref{pro::homo}).  For general polynomial optimization, there
has been much work on computing the infimum of the objective via SOS
relaxations, see the survey \cite{Laurent_sumsof} by Laurent and the references
therein.  A standard approach for solving polynomial optimizations is the
hierarchy of semidefinite programming relaxations proposed by Lasserre
\cite{LasserreGlobal2001}. Recently, under the assumption that the optimum is
achievable, some gradient type SOS relaxations are presented in
\cite{DNPKKT,Nie2006}.  When the optimum is an asymptotic value, we refer to
the approaches proposed in
\cite{GGSZ2011,GSZ2010,Markus06,Vui08,Vui-constraints}. However, to the best
knowledge of the authors, the finite convergence of the above methods is
unknown which means that we need to solve a big number of SDPs until the
convergence is met. More recently, Nie \cite{exactJacNie} proposed a new type
SDP relaxation using the minors of the Jacobian of the objective and
constraints.  It is shown \cite{exactJacNie} that the Jacobian SDP relaxation
is exact under some generic assumptions.  Therefore, in this paper we employ
the Jacobian SDP relaxation to solve (\ref{pro::homo}).

For another contribution of this paper, we prove that the assumptions under
which the Jacobian SDP relaxation \cite{exactJacNie} is exact can be weakened.
Let $J$ be the set containing polynomials in the equality constraints and an
arbitrary subset of the inequality constraints.  In order to prove the finite
convergence of the Jacobian SDP relaxation, it is assumed in \cite{exactJacNie}
that the Jacobian of polynomials in $J$ has full rank at any point in $V(J)$
which is the variety defined by $J$.  In other words, if the ideal $\langle
J\rangle$ generated by polynomials in $J$ is radical and its codimension equals
the number of these polynomials, the variety $V(J)$ needs to be nonsingular to
guarantee the finite convergence.  In this paper, we prove that the
nonsingularity in the assumptions can be replaced by the finiteness of
singularities.  More specifically, we show that if there are only finite 
points in $V(J)$ such that the Jacobian of polynomials in $J$ is a rank
deficient matrix, then the Jacobian SDP relaxation \cite{exactJacNie} is still
exact. We also give an example to illustrate the correctness of our result.

Another possible and natural reformulation of (\ref{pro::rf}) is
\begin{equation}\label{pro::y}
\left\{
\begin{aligned}
\bar{s}^*:=\min\limits_{x\in\RR^n,y\in\RR}&\ p(x)y\\
\text{s.t.}&\ h_1(x)=\cdots=h_{m_1}(x)=0,\ q(x)y=1,\\
&\ g_1(x)\ge 0,\ldots,g_{m_2}(x)\ge 0.\\
\end{aligned}\right.
\end{equation}
Clearly, if $r^*$ is achievable in (\ref{pro::rf}), then (\ref{pro::y}) is
equivalent to (\ref{pro::rf}) and we always have $r^*=\bar{s}^*$. One might ask
why we do not solve (\ref{pro::y}) instead of (\ref{pro::homo}). The reason is
that when we employ Jacobian SDP relaxation \cite{exactJacNie} to solve
(\ref{pro::homo}) or (\ref{pro::y}), we need to assume that the optimum is
achievable.  Actually, $s^*$ in (\ref{pro::homo}) is more likely to be
achievable than $\bar{s}^*$ in (\ref{pro::y}). To see this, note that when
$r^*$ is not achievable, $\bar{s}^*$ can not be reached either. However, $s^*$
might still be achievable when $r^*$ is not. Some sufficient conditions are
given in Theorem \ref{thm::main2} and they are not necessary (see Example
\ref{ex::commonzeros} and \ref{ex::commonzeros2}). For a simple example,
consider the problem 
\[
\min\limits_{x_1\in\RR}\ \frac{1}{x_1^2+1}.
\] 
Obviously, $r^*=\bar{s}^*=0$ and they are not achievable. 
However, we can reformulate it as
\begin{equation*}
\left\{
\begin{aligned}
s^*:=\min\limits_{x_0, x_1\in\RR}&\ x_0^2\\
\text{s.t.}&\ x_1^2+x_0^2=1.
\end{aligned}\right.
\end{equation*}
Then $s^*=0$ and we have two minimizers $(0,\pm 1)$ which verify that $r^*$ is
not achievable by (\ref{item::3}) in Theorem \ref{thm::main2}.
\vspace{5pt}

This paper is organized as follows. In Section \ref{sec::rf2po}, we reformulate
(\ref{pro::rf}) as (\ref{pro::homo}) by the technique of homogenization and
investigate the relations between the achievabilities of the optima of these
two optimizations.  We introduce the Jacobian SDP relaxation \cite{exactJacNie}
in Section \ref{sec::exactrelax} and show that the assumptions therein under
which the Jacobian SDP relaxation is exact can be weakened. In Section
\ref{sec::rf2}, we first return to solving the problem (\ref{pro::rf}) and make
some discussions, then we give some numerical examples to illustrate the
efficiency of our method. 
 
\vspace{8pt}

\paragraph{\bf Notation} The symbol ${\mathbb N}$ (resp., $\RR$, $\CC$) denotes the
set of nonnegative integers (resp., real numbers, complex numbers). For any
$t\in\RR$, $\lceil t\rceil$ denotes the smallest integer not smaller than $t$.
For integer $n>0$, $[n]$ denotes the set $\{1,\cdots,n\}$ and for a subset $J$
of $[n]$, $|J|$ denotes its cardinality. For $x\in\RR^n$, $x_i$ denotes the
$i$-th component of $x$. The symbol $\RR[x]=\RR[x_1,\ldots,x_n]$ (resp.,
$\CC[x]=\CC[x_1,\ldots,x_n]$) denotes the ring of polynomials in
$(x_1,\ldots,x_n)$ with real (resp. complex) coefficients. For
$\alpha\in{\mathbb N}^n$, denote $|\alpha|=\alpha_1+\cdots+\alpha_n$. For
$x\in\RR^n$ and $\alpha\in{\mathbb N}^n$, $x^{\alpha}$ denotes
$x_1^{\alpha_1}\cdots x_n^{\alpha_n}$. For a symmetric matrix $X$, $X\succeq 0$
(resp., $X\succ 0$) means $X$ is positive semidefinite (resp., positive
definite). For $u\in\RR^n$, $\|u\|_2$ denotes the standard Euclidean norm. 
${\mathcal C}^k$ denotes the class of functions whose $k$-th derivatives are continuous.

\section{Minimizing Rational Functions by Homogenization}\label{sec::rf2po}

In this section, we first reformulate the minimization of (\ref{pro::rf}) as
polynomial optimization (\ref{pro::homo}) by the technique of homogenization
and investigate the relations between the achievabilities of the optima of
these two problems. Then we show that the condition under which the problems
(\ref{pro::rf}) and (\ref{pro::homo}) are equivalent is generic. 

\subsection{Reformulating the minimization of rational functions by homogenization}
Given a polynomial $f\in\RR[x]$, let
$\tilde{x}=(x_0,x)=(x_0,x_1,\ldots,x_n)\in\RR^{n+1}$ and $f^{hom}(\tilde{x})$
be the homogenization of $f$, i.e.
$f^{hom}(\tilde{x})=x_0^{\deg{(f)}}f(x/x_0)$.  We define the following sets:
\begin{equation}\label{def::S}
\begin{aligned}
S&:=\left\{x\in\RR^n\mid h_i(x)=0,\ g_j(x)\ge 0,\ 
i\in[m_1],\ j\in[m_2]\right\},\\
\widetilde{S}_0&:=\left\{\tilde{x}\in\RR^{n+1}\mid h^{hom}_i(\tilde{x})=0,\ 
g_j^{hom}(\tilde{x})\ge 0,\ x_0>0,\ i\in[m_1],\ j\in[m_2]\right\},\\
\widetilde{S}&:=\left\{\tilde{x}\in\RR^{n+1}\mid h^{hom}_i(\tilde{x})=0,\ 
g_j^{hom}(\tilde{x})\ge 0,\ x_0\ge 0,\ i\in[m_1],\ j\in[m_2]\right\}.\\
\end{aligned}
\end{equation}
Recall that for integer $n>0$, $[n]$ denotes the set $\{1,\cdots,n\}$.  Let
$\text{closure}(\widetilde{S}_0)$ be the closure of $\widetilde{S}_0$ in
$\RR^{n+1}$.  From the above definition, we immediately have 
\begin{prop}\label{prop::eq}
$f(x)\ge 0$ on $S$ if and only if $f^{hom}(\tilde{x})\ge 0$ 
on $closure(\widetilde{S}_0)$.  
\end{prop}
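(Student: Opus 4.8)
The plan is to prove both directions by relating points of $S$ to points of $\operatorname{closure}(\widetilde{S}_0)$ via the natural scaling map. First I would establish a concrete description of $\widetilde{S}_0$: since $x_0>0$ there, the substitution $x = \tilde{x}'/x_0$ (where $\tilde{x}=(x_0,\tilde{x}')$) together with the identity $h_i^{hom}(\tilde{x}) = x_0^{\deg(h_i)} h_i(x_0^{-1}\tilde{x}')$ and the analogous identity for the $g_j^{hom}$ shows that $\tilde{x}\in\widetilde{S}_0$ if and only if $x_0>0$ and $x_0^{-1}\tilde{x}'\in S$. Equivalently, $\widetilde{S}_0 = \{\,(\lambda, \lambda x) : \lambda>0,\ x\in S\,\}$, the cone over $\{1\}\times S$ with the apex removed. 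Likewise $f^{hom}(\lambda,\lambda x) = \lambda^{\deg(f)} f(x)$ for $\lambda>0$, so the sign of $f^{hom}$ on $\widetilde{S}_0$ is exactly the sign of $f$ on $S$; this gives the equivalence "$f(x)\ge 0$ on $S$ $\iff$ $f^{hom}\ge 0$ on $\widetilde{S}_0$" immediately.

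The remaining work is to pass from $\widetilde{S}_0$ to its closure. Since $f^{hom}$ is continuous (indeed polynomial), if $f^{hom}\ge 0$ on $\widetilde{S}_0$ then $f^{hom}\ge 0$ on $\operatorname{closure}(\widetilde{S}_0)$ by continuity; combined with the previous paragraph this yields the "only if" direction. For the converse, suppose $f^{hom}\ge 0$ on $\operatorname{closure}(\widetilde{S}_0)$. In particular $f^{hom}\ge 0$ on $\widetilde{S}_0$ itself, and by the first paragraph this forces $f\ge 0$ on $S$. So in fact both implications follow once the cone description of $\widetilde{S}_0$ is in hand, and the closure only enters to make the statement symmetric — the key observation is simply that $f^{hom}\ge 0$ on $\operatorname{closure}(\widetilde{S}_0)$ and $f^{hom}\ge 0$ on $\widetilde{S}_0$ are equivalent because $f^{hom}$ is continuous and $\widetilde{S}_0$ is dense in its own closure.

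I would present it cleanly as: (i) $f(x)\ge0$ on $S$ $\Rightarrow$ $f^{hom}\ge0$ on $\widetilde{S}_0$ (by the scaling identity, using $\lambda^{\deg(f)}>0$) $\Rightarrow$ $f^{hom}\ge0$ on $\operatorname{closure}(\widetilde{S}_0)$ (by continuity); (ii) conversely $f^{hom}\ge0$ on $\operatorname{closure}(\widetilde{S}_0)$ $\Rightarrow$ $f^{hom}\ge0$ on $\widetilde{S}_0$ (trivially) $\Rightarrow$ $f(x)\ge0$ on $S$ (again by the scaling identity, taking $\lambda=1$, i.e. evaluating at $(1,x)\in\widetilde{S}_0$ for $x\in S$). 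The one point that needs a word of care is that $(1,x)$ indeed lies in $\widetilde{S}_0$ for every $x\in S$: this is because $h_i^{hom}(1,x)=h_i(x)=0$, $g_j^{hom}(1,x)=g_j(x)\ge0$, and $x_0=1>0$. There is no real obstacle here; the proposition is essentially the bookkeeping identity $f^{hom}(x_0,x) = x_0^{\deg(f)} f(x/x_0)$ read off on the relevant sets, and the only thing to be slightly careful about is not confusing $\widetilde{S}_0$ (where $x_0>0$) with $\widetilde{S}$ (where $x_0\ge0$) — the statement is about the closure of the former, which may be strictly smaller than the latter.
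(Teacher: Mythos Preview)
Your proposal is correct and follows essentially the same approach as the paper: both rely on the scaling identity $f^{hom}(x_0,x)=x_0^{\deg(f)}f(x/x_0)$ for $x_0>0$ to reduce the question on $\widetilde{S}_0$ to the question on $S$, and then invoke continuity to pass to the closure. The only cosmetic difference is that you first package the observation as the cone description $\widetilde{S}_0=\{(\lambda,\lambda x):\lambda>0,\ x\in S\}$ and then appeal to continuity abstractly, whereas the paper argues the ``only if'' direction by taking an explicit convergent sequence in $\widetilde{S}_0$; the content is the same.
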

\begin{proof}
We first prove the ``if'' part. Suppose $f^{hom}(\tilde{x})\ge 0$ on
$\text{closure}(\widetilde{S}_0)$.  If there exists a point $u\in S$ such that
$f(u)<0$, then $(1,u)\in \widetilde{S}_0$. Thus $f^{hom}(1,u)=f(u)<0$ which is
a contradiction. 

Next we prove the ``only if'' part. Suppose $f(x)\ge 0$ on $S$ and consider a
point $(u_0,u)\in\RR^{n+1}$ in the $\text{closure}(\widetilde{S}_0)$. There
exists a sequence $\{(u_{k,0},u_k)\}\in \widetilde{S}_0$ such that
$\lim\limits_{k\rightarrow\infty}(u_{k,0},u_k)=(u_0,u)$. Since $u_{k,0}>0$ for
all $k\in{\mathbb N}$, we consider the sequence $\{u_k/u_{k,0}\}$. For
$i=1,\ldots,m_1$ and $j=1,\ldots,m_2$, we have
$h_i(u_k/u_{k,0})=h_i^{hom}(u_{k,0},u_k)/(u_{k,0})^{\deg(h_i)}=0$ and
$g_j(u_k/u_{k,0})=g_j^{hom}(u_{k,0},u_k)/(u_{k,0})^{\deg(g_j)}\ge 0$.  It
implies that $\{u_k/u_{k,0}\}\in S$. Thus 
\[
f^{hom}(u_0,u)=\lim\limits_{k\rightarrow\infty}f^{hom}(u_{k,0},u_k)
=\lim\limits_{k\rightarrow\infty}u_{k,0}^{\deg{(f)}}f(u_k/u_{k,0})\ge 0,
\]
which concludes the proof.
\end{proof}

\vspace{5pt}

Let $d=\max\{\deg{(p)},\deg{(q)}\}$, $\tilde{p}(\tilde{x})=x_0^dp(x/x_0)$ and
$\tilde{q}(\tilde{x})=x_0^dq(x/x_0)$. We reformulate the minimization of
(\ref{pro::rf}) as the following constrained polynomial optimization: 
\begin{equation*}
\left\{
\begin{aligned}
s^*:=\underset{\tilde{x}\in\RR^{n+1}}{\min}&\ \tilde{p}(\tilde{x})\\
\text{s.t.}&\ h^{hom}_1(\tilde{x})=\cdots=h^{hom}_{m_1}(\tilde{x})=0,\ 
\tilde{q}(\tilde{x})=1,\\
&\ g^{hom}_1(\tilde{x})\ge 0,\ldots,g^{hom}_{m_2}(\tilde{x})\ge 0,\ 
x_0\ge 0.\\
\end{aligned}\right.
\end{equation*}
We now investigate the relations between $r^*$ and $s^*$.  In the following of
this paper, without loss of generality, we always assume that
\begin{equation}\label{assump::q>0}
\begin{aligned} 
&q(x)>0\ \text{on a neighbourhood of a minimizer of}\
(\ref{pro::rf});\ \text{or}\ q(x)>0\ \text{for}\\  
&\text{any}\ x\in S\ \text{with sufficient large  Euclidean norm if}\ 
r^*\ \text{is not achievable}. 
\end{aligned}
\end{equation} 
Otherwise we can replace $\frac{p(x)}{q(x)}$ by $\frac{p(x)q(x)}{q^2(x)}$.
Note that we do not assume $q(x)$ is nonnegative on the whole feasible set $S$
as in \cite{JibeteanKlerkRF,Nie2008}. 
\begin{define}{\upshape\cite{exactJacNie}}
If there exists a point $0\neq (0,u)\in\widetilde{S}$ but $(0,u)\notin
closure(\widetilde{S}_0)$, then we say $S$ is not {\itshape closed at
$\infty$}; otherwise, we say $S$ is closed at $\infty$.
\end{define}
\begin{theorem}\label{thm::main}
It always holds that $s^*\le r^*$, and the equality holds if one of the
following conditions is satisfied:
\begin{enumerate}[\upshape (a)]
\item $S$ is closed at $\infty$; 
\item $\deg(p)>\deg(q)$;
\item $s^*$ is achievable and $x^*_0>0$
for at least one of its minimizers $\tilde{x}^*=(x^*_0,x^*)$.
\end{enumerate} 
\end{theorem}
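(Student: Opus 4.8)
The plan is to establish the inequality $s^* \le r^*$ first, and then to prove equality under each of the three conditions separately. For the inequality, I would start from a feasible point $x \in S$ of (\ref{pro::rf}) with $q(x) > 0$ (which is available by assumption (\ref{assump::q>0}), at least near a minimizing point or sequence) and construct an explicit feasible point of (\ref{pro::homo}): scale $(1,x)$ by a positive factor $\lambda$ so that $\tilde q(\lambda,\lambda x) = \lambda^d q(x) = 1$, i.e.\ take $\lambda = q(x)^{-1/d}$. Then $\tilde x = (\lambda, \lambda x)$ satisfies all the equality and inequality constraints of (\ref{pro::homo}) (using homogeneity of the $h_i^{hom}$, $g_j^{hom}$ and $x_0 = \lambda > 0$), and $\tilde p(\tilde x) = \lambda^d p(x) = p(x)/q(x)$. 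Passing to the infimum — and being slightly careful when $r^*$ is not achieved, using a minimizing sequence together with the ``large Euclidean norm'' clause of (\ref{assump::q>0}) — gives $s^* \le r^*$.

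For equality I would argue by contradiction: suppose $s^* < r^*$, so there is a feasible $\tilde x^* = (x_0^*, x^*) \in \widetilde S$ with $\tilde q(\tilde x^*) = 1$ and $\tilde p(\tilde x^*) < r^*$ (either an exact minimizer or a point of a minimizing sequence). The key dichotomy is on the sign of $x_0^*$. If $x_0^* > 0$, then $x := x^*/x_0^*$ lies in $S$ (dehomogenize the constraints as in the proof of Proposition~\ref{prop::eq}), and $p(x)/q(x) = \tilde p(\tilde x^*)/\tilde q(\tilde x^*) = \tilde p(\tilde x^*) < r^*$, contradicting the definition of $r^*$; this handles case (c) immediately, and reduces cases (a) and (b) to ruling out $x_0^* = 0$. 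If $x_0^* = 0$: since $\tilde q(\tilde x^*) = 1 \ne 0$ while $\tilde q$ is homogeneous of degree $d$, we must have $d > \deg(q)$ if the degree-$d$ part of $\tilde q$ vanishes at $x_0 = 0$; under (b), $d = \deg(p) > \deg(q)$ forces $\tilde q(0, x^*) = 0$, an outright contradiction, disposing of case (b). Under (a), I would use closedness at $\infty$: the point $(0, x^*) \in \widetilde S$ must lie in $\mathrm{closure}(\widetilde S_0)$, so it is a limit of points $(u_{k,0}, u_k) \in \widetilde S_0$ with $u_{k,0} > 0$; then $u_k/u_{k,0} \in S$, and I would track the scaling to show that $q(u_k/u_{k,0}) \to \infty$ (since $\tilde q$ continuous gives $u_{k,0}^d q(u_k/u_{k,0}) \to \tilde q(0,x^*) = 1$ with $u_{k,0} \to 0$) while $\tilde p(u_{k,0},u_k) \to \tilde p(0,x^*) \le s^*$; examining $p(u_k/u_{k,0})/q(u_k/u_{k,0}) = \tilde p(u_{k,0},u_k)/\tilde q(u_{k,0},u_k) \to \tilde p(0,x^*) < r^*$ along this sequence in $S$ again contradicts the definition of $r^*$.

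The main obstacle I anticipate is the bookkeeping around the non-achievable case and the degree comparison when $x_0^* = 0$: one has to be careful that $\tilde p$ and $\tilde q$ are defined with the \emph{common} exponent $d = \max\{\deg(p),\deg(q)\}$ rather than their individual degrees, so $\tilde q(0,x) = 0$ exactly when $\deg(q) < d$ and $\tilde p(0,x) = 0$ exactly when $\deg(p) < d$, and the interplay of these facts is what makes case (b) work and what needs the closedness hypothesis in case (a). A secondary technical point is justifying the limit manipulations (continuity of $\tilde p, \tilde q$, positivity of the $u_{k,0}$, and that the dehomogenized sequence genuinely stays in $S$) — these are routine but must be stated, and the assumption (\ref{assump::q>0}) on $q > 0$ for large-norm feasible points is exactly what prevents pathologies in the limiting argument when $r^*$ is an asymptotic value.
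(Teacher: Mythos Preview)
Your proposal is correct and follows essentially the same route as the paper's proof: the scaling $\tilde x = (q(x)^{-1/d}, q(x)^{-1/d}x)$ for $s^* \le r^*$, dehomogenization when $x_0^* > 0$, the observation that $x_0 \mid \tilde q$ forces $x_0^* > 0$ in case~(b), and in case~(a) the approximation of a boundary point $(0,x^*)$ by points in $\widetilde S_0$ via closedness at~$\infty$, followed by the identity $p(u_k/u_{k,0})/q(u_k/u_{k,0}) = \tilde p(u_{k,0},u_k)/\tilde q(u_{k,0},u_k)$. The only cosmetic differences are that the paper argues directly (showing $\tilde p(u_0,u) \ge r^*$ for every feasible $(u_0,u)$) rather than by contradiction, and in case~(a) treats all feasible points uniformly by approximation rather than splitting on $x_0^* > 0$ versus $x_0^* = 0$; your explicit case split is arguably cleaner. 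One small slip: you write $\tilde p(0,x^*) \le s^*$, but your witness was only chosen with $\tilde p(\tilde x^*) < r^*$ --- this is harmless since the contradiction you actually derive uses $< r^*$.
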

\begin{proof}
We first show that $s^*\le r^*$. For any $u\in S$ in a neighborhood of a
minimizer of (\ref{pro::rf}) or with sufficient large Euclidean norm if $r^*$
is not achievable, if $\frac{p(x)}{q(x)}$ is defined at $u$, then $q(u)>0$ by
the assumption in (\ref{assump::q>0}). Let $t=q(u)^{1/d}=\tilde{q}(1,u)^{1/d}$.
We have $\tilde{q}(1/t,u/t)=1$ and $(1/t,u/t)\in \widetilde{S}$, so 
\[
\frac{p(u)}{q(u)}=\frac{\tilde{p}(1,u)}{\tilde{q}(1,u)}=
\frac{\tilde{p}(1/t,u/t)}{\tilde{q}(1/t,u/t)}=\tilde{p}(1/t,u/t)\ge s^*,
\]
then we have $s^*\le r^*$. Therefore, to show $r^*=s^*$, we only need to show
$r^*\le s^*$.

(a) For any feasible point $(u_0,u)$ of (\ref{pro::homo}), i.e., $(u_0,u)\in
\widetilde{S}$ and $\tilde{q}(u_0,u)=1$, since $S$ is closed at $\infty$, there
exists a sequence $\{(u_{k,0},u_k)\}$ in $\widetilde{S}$ such that $u_{k,0}>0$
for any $k\in{\mathbb N}$ and
$\underset{k\rightarrow\infty}{\lim}(u_{k,0},u_k)=(u_0,u)$.  Due to the
continuity of $\tilde{q}$,
$\lim\limits_{k\rightarrow\infty}\tilde{q}(u_{k,0},u_k)=1$. Hence, we can
always assume that for any $k\in{\mathbb N}$, $\tilde{q}(u_{k,0},u_k)>0$.  For
each $k\in{\mathbb N}$, let $t_k=\tilde{q}(u_{k,0},u_k)^{1/d}$ and consider the
sequence $\{(u_{k,0}/t_k,u_k/t_k)\}$. We have
$\underset{k\rightarrow\infty}{\lim}(u_{k,0}/t_k,u_k/t_k)=(u_0,u_k)$ and
$\tilde{q}(u_{k,0}/t_k,u_k/t_k)=1$. For $i=1,\ldots,{m_1}$, $j=1,\ldots,{m_2}$,
\begin{equation*}
\begin{aligned}
&0=\frac{1}{t_k^{\deg{(h_i)}}}h^{hom}_i(u_{k,0},u_k)=h^{hom}_i(u_{k,0}/t_k,u_k/t_k)=
\frac{1}{t_k^{\deg{(h_i)}}}u_{k,0}^{\deg{(h_i)}}h_i(u_k/u_{k,0}),\\
&0\le \frac{1}{t_k^{\deg{(g_j)}}}g^{hom}_j(u_{k,0},u_k)=g^{hom}_j(u_{k,0}/t_k,u_k/t_k)=
\frac{1}{t_k^{\deg{(g_j)}}}u_{k,0}^{\deg{(g_j)}}g_j(u_k/u_{k,0}),
\end{aligned}
\end{equation*}
which imply $(u_{k,0}/t_k,u_k/t_k)\in \widetilde{S}$ and $u_k/u_{k,0}\in S$
for all $k$.  Hence 
\[
\tilde{p}(u_{k,0}/t_k,u_k/t_k)=\frac{\tilde{p}(u_{k,0}/t_k,u_k/t_k)}
{\tilde{q}(u_{k,0}/t_k,u_k/t_k)}=\frac{p(u_k/u_{k,0})}{q(u_k/u_{k,0})}\ge r^*
\]
and
$\tilde{p}(u_0,u)=\lim\limits_{k\rightarrow\infty}\tilde{p}(u_{k,0}/t_k,u_k/t_k)\ge
r^*$ which means $r^*\le s^*$.

(b) If $\deg(p)>\deg(q)$, then $x_0$ divides $\tilde{q}(\tilde{x})$. By
$\tilde{q}(\tilde{x})=1$, we have $u_0>0$ for any feasible point $(u_0,u)$ of
(\ref{pro::homo}) and it is easy to see that $u/u_0\in S$, then 
\[
\tilde{p}(u_0,u)=\frac{\tilde{p}(u_0,u)}{\tilde{q}(u_0,u)}
=\frac{\tilde{p}(1,u/u_0)}{\tilde{q}(1,u/u_0)}=\frac{p(u/u_0)}{q(u/u_0)}\ge
r^*,
\]
which means $r^*\le s^*$.

(c) Since $x_0^*>0$, we have $x^*/x^*_0\in S$ and 
\[
s^*=\tilde{p}(x_0^*,x^*)=\frac{\tilde{p}(x_0^*,x^*)}
{\tilde{q}(x_0^*,x^*)}=\frac{p(x^*/x_0^*)}{q(x^*/x_0^*)}\ge r^*,
\]
which implies $r^*=s^*$.
\end{proof}

The following corollary shows that the minimizations of (\ref{pro::rf}) and
(\ref{pro::homo}) are always equivalent when there are no constraints in
(\ref{pro::rf}).
\begin{cor}\label{cor::eq}
If ${m_1}={m_2}=0$ in {\upshape(\ref{pro::rf})}, then $S=\RR^n$ is closed at
$\infty$ and $r^*=s^*$.
\end{cor}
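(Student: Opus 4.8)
The plan is to reduce the statement to part (a) of Theorem~\ref{thm::main} by checking directly that, in the absence of constraints, $S=\RR^n$ is trivially closed at $\infty$. So the only real work is to unwind the definitions in the case $m_1=m_2=0$ and to verify the density condition hidden in ``closed at $\infty$''.

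First I would specialize the sets in (\ref{def::S}). With no equalities $h_i$ and no inequalities $g_j$, the only surviving conditions are the sign constraints on $x_0$, so
\[
\widetilde{S}_0=\{\tilde{x}\in\RR^{n+1}\mid x_0>0\},\qquad
\widetilde{S}=\{\tilde{x}\in\RR^{n+1}\mid x_0\ge 0\},
\]
the open and closed half-spaces in the $x_0$-direction, while $S=\RR^n$. Next I would compute $\text{closure}(\widetilde{S}_0)$: for any $u\in\RR^n$ the point $(0,u)$ is the limit of the sequence $(1/k,u)\in\widetilde{S}_0$ as $k\to\infty$, so the hyperplane $\{x_0=0\}$ lies in $\text{closure}(\widetilde{S}_0)$; combined with $\widetilde{S}_0\subseteq\text{closure}(\widetilde{S}_0)$ this gives $\text{closure}(\widetilde{S}_0)=\{x_0\ge 0\}=\widetilde{S}$. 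Hence there is no point $0\neq(0,u)\in\widetilde{S}$ lying outside $\text{closure}(\widetilde{S}_0)$, which is exactly the assertion that $S$ is closed at $\infty$.

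Finally, applying part (a) of Theorem~\ref{thm::main}, whose hypothesis we have just confirmed, yields $r^*=s^*$, completing the proof.

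I do not anticipate any genuine obstacle: the whole content is the elementary topological fact that the open half-space $\{x_0>0\}$ is dense in the closed half-space $\{x_0\ge 0\}$, so $\widetilde{S}$ contains no ``extra'' points on the hyperplane at infinity beyond $\text{closure}(\widetilde{S}_0)$. The only point deserving a word of care is that this density uses crucially that there are no further constraints; once $h_i$ or $g_j$ are present, $\widetilde{S}_0$ need not be dense in the slice of $\widetilde{S}$ on $\{x_0=0\}$, which is precisely the failure mode that the notion of being closed at $\infty$ is designed to rule out.
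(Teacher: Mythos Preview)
Your proposal is correct and matches the paper's approach: the paper states the corollary without proof, treating it as an immediate consequence of Theorem~\ref{thm::main}(a), and you have simply spelled out the one-line verification that $\text{closure}(\widetilde{S}_0)=\widetilde{S}$ when there are no constraints.
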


\begin{remark}
If $S=\RR^n$, we can remove $x_0\ge 0$ in {\upshape(\ref{pro::homo})}. In fact, if
there are no constraints, according to the proof of Part $(a)$ in Theorem
\ref{thm::main}, we only need $u_{k,0}\neq 0$ to get the same result.
Therefore, the global minimization
\[
r^*:=\underset{x\in\RR^n}{\min}\frac{p(x)}{q(x)}
\] 
is equivalent to 
\begin{equation}\label{pro::homo::global}
\left\{
\begin{aligned}
s^*:=\underset{\tilde{x}\in\RR^{n+1}}{\min}&\ \tilde{p}(\tilde{x})\\
\text{s.t.}&\ \tilde{q}(\tilde{x})=1.\\
\end{aligned}\right.
\end{equation}
\end{remark}
\begin{remark}\label{rem::notclosed}
We would like to point out that not every $S$ is closed at $\infty$ and $s^*$
might be strictly smaller than $r^*$ in this case. For example, consider the
following problem:
\begin{equation}\label{pro::conex}
\left\{
\begin{aligned}
r^*:=\underset{x_1,x_2\in\RR}{\min}&\ \frac{x_1}{(x_1-x_2)^2}\\
\text{s.t.}&\ x_1^2(x_1-x_2)=1,\\
&\ x_1-1\ge 0.
\end{aligned}\right.
\end{equation}
Clearly, we have $r^*=1$. However, \cite[Example 5.2 (i)]{NieDisNon} shows that
the set 
\[
\{(x_1,x_2)\in\RR^2\mid x_1^2(x_1-x_2)-1=0\}
\]
is not closed at $\infty$. Actually, 
\[
S:=\{(x_1,x_2)\in\RR^2\mid x_1^2(x_1-x_2)-1=0, x_1-1\ge 0\}
\]
is not closed at $\infty$, either.  To see it, we have 
\[
\widetilde{S}:=\{(x_0,x_1,x_2)\in\RR^3\mid x_1^2(x_1-x_2)-x_0^3=0, x_1-x_0\ge 0, 
 x_0\ge 0\}.
\]
Consider the point $(0,0,1)\in\widetilde{S}$.  Suppose that there exists a
sequence $\{(x_{k,0},x_{k,1},x_{k,2})\}$ in $\widetilde{S}$ such that
$\underset{k\rightarrow\infty}{\lim}(x_{k,0},x_{k,1},x_{k,2})=(0,0,1)$ and
$x_{k,0}>0$ for all $k\in{\mathbb N}$. Then for $0<\varepsilon<1/2$, there
exists $N\in{\mathbb N}$ such that for any $k>N$, we have 
\[
0<x_{k,0}<\varepsilon,\ \vert x_{k,1}\vert<\varepsilon,\ \vert x_{k,2}-1\vert<\varepsilon.
\]
Thus
\[
0<x_{k,0}^3=x_{k,1}^2(x_{k,1}-x_{k,2})\le x_{k,1}^2(\varepsilon-1+\varepsilon)\le 0
\]
which is a contradiction. Therefore, $S$ is not closed at $\infty$ and we have
$s^*=0< r^*$ if we reformulate {\upshape(\ref{pro::conex})} by homogenization
as the following problem
\begin{equation*}
\left\{
\begin{aligned}
s^*:=\underset{x_0,x_1,x_2\in\RR}{\min}&\ x_0x_1\\
\text{s.t.}&\ (x_1-x_2)^2-1=x_1^2(x_1-x_2)-x_0^3=0,\\
&\ x_1-x_0\ge 0, x_0\ge 0.
\end{aligned}\right.
\end{equation*}
However, in section \ref{subsec::generic} we will show that the closedness at
$\infty$ is a generic condition for a given set $S$.
\end{remark}
Let 
\[
\widehat{S}:=\{
x\in\RR^{n}\mid 
\hat{h}_i(x)=0,\ \hat{g}_j(x)\ge 0,\ i=1,\ldots,{m_1},\ j=1,\ldots,{m_2}\}
\]
where $\hat{h}_i$ and $\hat{g}_j$ denote the homogeneous parts of the highest
degree of $h_i$ and $g_j$, respectively.  Denote $p_d(x)$ and $q_d(x)$ the
homogeneous parts of degree $d$ of $p(x)$ and $q(x)$, respectively.
\begin{theorem}\label{thm::main2}
If one of the conditions in Theorem \ref{thm::main} holds, then the following
properties hold.
\begin{enumerate}[\upshape (a)]
\item\label{item::1} $r^*$ is achievable if and only if $s^*$ is achievable at
a minimizer $\tilde{x}^*=(x_0^*,x^*)$ with $x_0^*\neq 0$;
\item\label{item::2} If neither $p(x)$ and $q(x)$ have real common roots in $S$,
nor $p_d(x)$ and $q_d(x)$ have real nonzero common roots in $\widehat{S}$, 
then $s^*$ is achievable. 
\item\label{item::3} If $s^*$ is achievable and $x_0^*=0$ for all minimizers
$\tilde{x}^*=(x_0^*,x^*)$ of {\upshape(\ref{pro::homo})}, then $r^*$ is not
achievable.  For each minimizer $\tilde{x}^*=(0,x^*)$ of
{\upshape(\ref{pro::homo})}, if there exists a sequence
$\{\tilde{x}_k\}=\{(x_{k,0},x_k)\}$ in $\widetilde{S}$ such that
$\underset{k\rightarrow\infty}{\lim}\tilde{x}_k=\tilde{x}^*$ and $x_{k,0}>0$
for all $k\in{\mathbb N}$, then
$\underset{k\rightarrow\infty}{\lim}\frac{p(x_k/x_{k,0})}{q(x_k/x_{k,0})}=r^*$.
\end{enumerate}
\end{theorem}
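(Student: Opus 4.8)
The plan is to prove the three parts in turn using the homogenization correspondence of Section~\ref{sec::rf2po}; since one of the conditions of Theorem~\ref{thm::main} is assumed, throughout we have $r^*=s^*$. For part~(a), the ``only if'' direction: if $u\in S$ attains $r^*$ then $q(u)>0$ by~(\ref{assump::q>0}), and with $t:=q(u)^{1/d}>0$ the point $(1/t,u/t)$ lies in $\widetilde S$, satisfies $\tilde q(1/t,u/t)=1$, and has $\tilde p(1/t,u/t)=p(u)/q(u)=r^*=s^*$; hence it is a minimizer of~(\ref{pro::homo}) with first coordinate $1/t\neq 0$. For the ``if'' direction: a minimizer $\tilde x^*=(x_0^*,x^*)$ of~(\ref{pro::homo}) with $x_0^*\neq 0$ has $x_0^*>0$ (because of the constraint $x_0\ge 0$), so $x^*/x_0^*\in S$ and $p(x^*/x_0^*)/q(x^*/x_0^*)=\tilde p(\tilde x^*)/\tilde q(\tilde x^*)=s^*=r^*$, so $r^*$ is attained; this is the dehomogenization already used in the proof of Theorem~\ref{thm::main}(c). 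The first assertion of part~(c) is then immediate: if every minimizer of~(\ref{pro::homo}) has $x_0^*=0$, there is no minimizer with nonzero first coordinate, so by part~(a) $r^*$ is not achievable.

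For part~(b) I would run a normalization argument on a minimizing sequence (here $s^*=r^*$ is finite, which is what makes ``achievable'' a meaningful claim). Take feasible points $\tilde x_k=(x_{k,0},x_k)$ of~(\ref{pro::homo}) with $\tilde p(\tilde x_k)\to s^*$. The feasible set of~(\ref{pro::homo}) is closed, so if $\{\tilde x_k\}$ has a bounded subsequence, a convergent sub-subsequence yields a minimizer and we are done. Otherwise $\|\tilde x_k\|\to\infty$; put $\tilde y_k:=\tilde x_k/\|\tilde x_k\|$ and pass to a subsequence with $\tilde y_k\to\tilde y^*=(y_0^*,y^*)$, $\|\tilde y^*\|=1$. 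Homogeneity of $h_i^{hom},g_j^{hom}$ (and scale-invariance of $x_0\ge 0$) gives $0\neq\tilde y^*\in\widetilde S$; homogeneity of $\tilde q$ of degree $d$ with $\tilde q(\tilde x_k)=1$ gives $\tilde q(\tilde y^*)=\lim 1/\|\tilde x_k\|^{d}=0$; homogeneity of $\tilde p$ of degree $d$ with $\tilde p(\tilde x_k)$ bounded gives $\tilde p(\tilde y^*)=\lim \tilde p(\tilde x_k)/\|\tilde x_k\|^{d}=0$. If $y_0^*>0$, then $y^*/y_0^*\in S$ and $(y_0^*)^d\,q(y^*/y_0^*)=\tilde q(\tilde y^*)=0$, $(y_0^*)^d\,p(y^*/y_0^*)=\tilde p(\tilde y^*)=0$, exhibiting a common real root of $p$ and $q$ in $S$ --- contradicting the first hypothesis. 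If $y_0^*=0$, then $y^*\neq 0$ and $\hat h_i(y^*)=h_i^{hom}(0,y^*)=0$, $\hat g_j(y^*)=g_j^{hom}(0,y^*)\ge 0$ put $y^*\in\widehat S$, while $q_d(y^*)=\tilde q(0,y^*)=0$ and $p_d(y^*)=\tilde p(0,y^*)=0$ exhibit a common nonzero real root of $p_d$ and $q_d$ in $\widehat S$ --- contradicting the second hypothesis. So the minimizing sequence is bounded and $s^*$ is achieved. The delicate point, which I expect to be the main obstacle, is exactly this dichotomy together with correctly reading the second hypothesis when $\deg p$ and $\deg q$ differ and one of $p_d,q_d$ vanishes identically (if, say, $q_d\equiv 0$, then the hypothesis says $p_d$ has no nonzero root in $\widehat S$, which is what $p_d(y^*)=0$ would violate).

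For the second assertion of part~(c), let $\tilde x^*=(0,x^*)$ be a minimizer of~(\ref{pro::homo}) and let $\tilde x_k=(x_{k,0},x_k)\in\widetilde S$ with $\tilde x_k\to\tilde x^*$ and $x_{k,0}>0$. Feasibility of $\tilde x^*$ forces $\tilde q(\tilde x^*)=1$, so $\tilde q(\tilde x_k)\to 1$, hence $\tilde q(\tilde x_k)>0$ for large $k$; also $x_k/x_{k,0}\in S$ since $x_{k,0}>0$. Using $\tilde p(\tilde x_k)=x_{k,0}^d\,p(x_k/x_{k,0})$ and $\tilde q(\tilde x_k)=x_{k,0}^d\,q(x_k/x_{k,0})$ we get, for $k$ large,
\[
\frac{p(x_k/x_{k,0})}{q(x_k/x_{k,0})}=\frac{\tilde p(\tilde x_k)}{\tilde q(\tilde x_k)}\ \longrightarrow\ \frac{\tilde p(\tilde x^*)}{\tilde q(\tilde x^*)}=\frac{s^*}{1}=r^*,
\]
which is the claimed limit.
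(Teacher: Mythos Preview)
Your proof is correct and follows essentially the same route as the paper's own proof: part~(a) is the same homogenize/dehomogenize correspondence via $t=q(u)^{1/d}$; part~(b) is the same normalization-of-a-minimizing-sequence argument leading to a unit-sphere limit $\tilde y^*$ with $\tilde p(\tilde y^*)=\tilde q(\tilde y^*)=0$ and the same dichotomy on $y_0^*$; part~(c) is the same continuity argument for $\tilde p$ and $\tilde q$. Your explicit remarks that the feasible set of~(\ref{pro::homo}) is closed and how to read the hypothesis of~(b) when one of $p_d,q_d$ vanishes identically are useful clarifications the paper leaves implicit, but they do not change the strategy.
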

\begin{proof}
If one of the conditions in Theorem \ref{thm::main} holds, we have $r^*=s^*$.

(a) Let $x^*$ be a minimizer of (\ref{pro::rf}), then $x^*\in S$ and
$t=\tilde{q}(1,x^*)^{1/d}=q(x^*)^{1/d}>0$ by the assumption in
(\ref{assump::q>0}).  It is easy to verify that $(1/t,x^*/t)\in \widetilde{S}$
and $\tilde{q}(1/t,x^*/t)=1$.  We have $\tilde{p}(1/t,x^*/t)=r^*=s^*$ which
means $(1/t,x^*/t)$ is a minimizer of (\ref{pro::homo}). If $s^*$ is achieved
at $\tilde{x}^*=(x_0^*,x^*)\in \widetilde{S}$ with $x_0^*>0$, then $r^*$ is
achieved at $x^*/x_0^*\in S$.

(b) To the contrary, we assume that $s^*$ is not achievable. Then there exists
a sequence $\{\tilde{x}_k\}$ in $\widetilde{S}$ such that
$\underset{k\rightarrow\infty}{\lim}{||\tilde{x}_k||}_2=\infty$,
$\underset{k\rightarrow\infty}{\lim}\tilde{p}(\tilde{x}_k)=s^*$ and for all
$k\in{\mathbb N}$, $\tilde{q}(\tilde{x}_k)=1$. Consider the bounded sequence
$\{\tilde{x}_k/{||\tilde{x}_k||}_2\}\subseteq \widetilde{S}$. By
Bolzano-Weierstrass Theorem, there exists a subsequence
$\{\tilde{x}_{k_j}/{||\tilde{x}_{k_j}||}_2\}$ such that
$\underset{j\rightarrow\infty}{\lim}\tilde{x}_{k_j}/{||\tilde{x}_{k_j}||}_2=\tilde{y}$
for some nonzero $\tilde{y}=(y_0,y)\in \widetilde{S}$ since $\widetilde{S}$ is
closed.  Let $\tilde{p}(\tilde{x}_{k_j})=s_{k_j}$, then
$\underset{j\rightarrow\infty}{\lim}{s_{k_j}}=s^*$. Since
$\tilde{p}(\tilde{x}_{k_j})=({||x_{k_j}||}_2)^d\tilde{p}(\tilde{x}_{k_j}/{||\tilde{x}_{k_j}||}_2)$
and $\underset{j\rightarrow\infty}{\lim}{||\tilde{x}_{k_j}||}_2=\infty$, we
have
$\tilde{p}(\tilde{y})=\underset{j\rightarrow\infty}{\lim}\tilde{p}(\tilde{x}_{k_j}/{||\tilde{x}_{k_j}||}_2)=0$.
Similarly, we can prove
$\tilde{q}(\tilde{y})=\underset{j\rightarrow\infty}{\lim}\tilde{q}(\tilde{x}_{k_j}/{||\tilde{x}_{k_j}||}_2)=0$.
Thus $\tilde{p}(\tilde{x})$ and $\tilde{q}(\tilde{x})$ have real nonzero common
root $\tilde{y}$ on unit sphere $S^{n+1}$.  We have $y_0=0$, otherwise $y/y_0$
is a real common root of $p(x)$ and $q(x)$ in $S$.  Therefore
$0=\tilde{p}(\tilde{y})=p_d(y),\ 0=\tilde{q}(\tilde{y})=q_d(y)$,\
$0=h^{hom}_i(\tilde{y})=\hat{h}_i(y)$ and $0\le
g^{hom}_j(\tilde{y})=\hat{g}_j(y)$, i.e. $p_d(x)$ and $q_d(x)$ have real
nonzero common root $y$ in $\widehat{S}$ which is a contradiction.

(c) By (\ref{item::1}), if $x_0^*=0$ for all minimizers of (\ref{pro::homo}),
$r^*$ is not achievable. Suppose $\tilde{x}^*= (0,x^*)$ is a minimizer of
(\ref{pro::homo}) and there exists a sequence
$\{\tilde{x}_k\}=\{(x_{k,0},x_k)\}$ in $\widetilde{S}$ such that
$\underset{k\rightarrow\infty}{\lim}\tilde{x}_k=\tilde{x}^*$ and $x_{k,0}>0$.
Then for each $k\in{\mathbb N}$, $x_k/x_{k,0}\in S$.  Since $\tilde{p}$ and
$\tilde{q}$ are continuous,
$\lim\limits_{k\rightarrow\infty}\tilde{p}(x_{k,0},x_k)=s^*$ and
$\lim\limits_{k\rightarrow\infty}\tilde{q}(x_{k,0},x_k)=1$.  Therefore, 
\[
\lim\limits_{k\rightarrow\infty}\frac{p(x_k/x_{k,0})}{q(x_k/x_{k,0})}
=\lim\limits_{k\rightarrow\infty}\frac{\tilde{p}(x_{k,0},x_k)}{q(x_{k,0},x_k)}
=s^*=r^*.
\]
Here completes the proof.
\end{proof}

\subsection{On the generality of closedness at infinity}\label{subsec::generic}
Although we have counter example in Remark \ref{rem::notclosed}, we next show 
that in general a given set $S$ in (\ref{def::S}) is indeed closed at $\infty$.
Therefore, if the constraints in (\ref{pro::rf}) are generic, (\ref{pro::rf}) 
and (\ref{pro::homo}) are equivalent.

Let us first review some elementary background about {\itshape resultants} and
{\itshape discriminants}.  More details can be found in
\cite{Cox-Little-OShea:UAG2005,DRMD,NieDisNon}. Let $f_1,\ldots,f_n$ be
homogeneous polynomials in $x=(x_1,\ldots,x_n)$. The resultant
$Res(f_1,\ldots,f_n)$ is a polynomial in the coefficients of $f_1,\ldots,f_n$
satisfying 
\[
Res(f_1,\ldots,f_n)=0\quad\Leftrightarrow\quad\exists 0\neq u\in\CC^n, 
\ f_1(u)=\cdots=f_n(u)=0.
\]
Let $f_1,\ldots,f_m$ be homogeneous polynomials with $m<n$.
The discriminant for $f_1,\ldots,f_m$, denoted by $\Delta(f_1,\ldots,f_m)$, is 
a polynomial in the coefficients of $f_1,\ldots,f_m$ such that
\[
\Delta(f_1,\ldots,f_m)=0
\]
if and only if the polynomial system
\[
f_1(x)=\cdots=f_m(x)=0
\]
has a solution $0\neq u\in\CC^n$ such that the Jacobian matrix of
$f_1,\ldots,f_n$ does not have full rank.

We next show that in general a given set $S$ in (\ref{def::S}) is closed at
$\infty$.  In the following, we suppose $S$ is not closed at $\infty$ and fix a
nonzero point $(0,u)\in\widetilde{S}$ but
$(0,u)\notin{\text{closure}}(\widetilde{S}_0)$.  Let $J(u):=\{j\in[m_2]\mid
g^{hom}_j(0,u)=0\}$. Then $g_j^{hom}(0,u)>0$ for all $j\in [m_2]\backslash
J(u)$.  We have the cardinality $m_1+\vert J(u)\vert\ge 1$, otherwise $(0,u)$
is an interior point of $\widetilde{S}$ and
$(0,u)\in{\text{closure}}(\widetilde{S}_0)$. Let
\[
V(u):=\{\tilde{x}\in\RR^{n+1}\mid h_i^{hom}(\tilde{x})=0,\
g_j^{hom}(\tilde{x})=0,\ i\in [m_1],\ j\in J(u)\}.
\] 
For any $\delta>0$, let
\[
B((0,u),\delta)=\{(x_0,x)\in\RR^{n+1}\mid\Vert(x_0,x)-(0,u)\Vert_2\le\delta\}.
\]
\begin{lemma}\label{lem::delta}
Suppose $S$ is not closed at $\infty$, then there exists $\delta>0$ such that
for all $(x_0,x)\in B((0,u),\delta)\cap V(u)$, we have $x_0\le 0$.
\end{lemma}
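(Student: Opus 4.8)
The plan is a direct argument by contradiction that simply unwinds the definition of ``closed at $\infty$''. Suppose no such $\delta$ exists. Then for every $k\in{\mathbb N}$ we may pick a point $(x_{k,0},x_k)\in B((0,u),1/k)\cap V(u)$ with $x_{k,0}>0$. By construction $\lim_{k\to\infty}(x_{k,0},x_k)=(0,u)$, and since each such point lies in $V(u)$ it satisfies $h_i^{hom}(x_{k,0},x_k)=0$ for all $i\in[m_1]$ and $g_j^{hom}(x_{k,0},x_k)=0$ for all $j\in J(u)$.

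The only thing missing to conclude $(x_{k,0},x_k)\in\widetilde{S}_0$ is the nonnegativity of $g_j^{hom}$ for the indices $j\in[m_2]\setminus J(u)$. Here I would invoke the definition of $J(u)$, which gives $g_j^{hom}(0,u)>0$ for each such $j$, together with continuity of the finitely many polynomials $g_j^{hom}$: there is a neighbourhood of $(0,u)$ on which all of them remain strictly positive, so for all sufficiently large $k$ we have $g_j^{hom}(x_{k,0},x_k)>0$. Combined with $x_{k,0}>0$ and the equalities noted above, this shows $(x_{k,0},x_k)\in\widetilde{S}_0$ for all large $k$. Letting $k\to\infty$ then forces $(0,u)\in{\text{closure}}(\widetilde{S}_0)$, contradicting the choice of $(0,u)$. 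Hence the required $\delta$ exists.

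I do not expect a serious obstacle here; the single point requiring care is exactly the handling of the inactive inequality constraints $g_j$ with $j\notin J(u)$, where one must pass from the pointwise positivity $g_j^{hom}(0,u)>0$ to positivity on a whole ball via continuity — this is precisely the reason $J(u)$ is introduced as the set of inequalities ``active'' at $(0,u)$. It is also worth noting in the write-up that $\delta$ can be chosen small enough that this positivity holds on all of $B((0,u),\delta)$, so that in fact every $(x_0,x)\in B((0,u),\delta)\cap V(u)$ with $x_0>0$ would already lie in $\widetilde{S}_0$; the lemma in the stated form then follows immediately.
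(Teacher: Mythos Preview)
Your proposal is correct and essentially identical to the paper's proof: both argue by contradiction, extract a sequence in $B((0,u),\delta_k)\cap V(u)$ with positive first coordinate, use continuity of the finitely many $g_j^{hom}$ with $j\notin J(u)$ to place the tail of the sequence in $\widetilde{S}_0$, and conclude $(0,u)\in\text{closure}(\widetilde{S}_0)$. The only cosmetic difference is that you fix $\delta_k=1/k$ whereas the paper takes an arbitrary positive sequence tending to zero.
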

\begin{proof}
Suppose such $\delta$ doesn't exist. Consider a sequence $\{\delta_k\}$ with
$\delta_k>0$ and $\lim\limits_{k\rightarrow\infty}\delta_k=0$. Then for each
$k$, there exists a point $(u_{k,0},u_k)\in B((0,u),\delta_k)\cap V(u)$ such that
$u_{k,0}>0$. By the continuity, there exists $N$ such that for all $k\ge N$,
$g_j^{hom}(u_{k,0},u_k)>0$ for each $j\in [m_2]\backslash J(u)$ which implies
$(u_{k,0},u_k)\in\widetilde{S}$ for all $k\ge N$ and
$(0,u)\in{\text{closure}}(\widetilde{S}_0)$. The contradiction follows.
\end{proof}
Now let us recall the Implicit Function Theorem.
\begin{theorem}\cite[Theorem 3.3.1; {\scshape The Implicit Function Theorem}]{ImplicitFunctionTh}
\label{th::implicitfunctionth}
Let 
\[
\Phi(x)=\Phi(x_1,\ldots,x_n)\equiv(\phi_1(x_1,\ldots,x_n),\ldots,\phi_m(x_1,\ldots,x_n))
\]
be a mapping of class ${\mathcal C}^k$, $k\ge 1$, defined on an open set
$U\subseteq\RR^n$ and taking values in $\RR^m$. Assume that $1\le m<n$. Let
$x^0=(x_1^0,\ldots,x_n^0)$ be a fixed point of $U$ and
$x_a^0=(x_1^0,\ldots,x_{n-m}^0)$. Suppose that the Jacobian determinant 
\[
\frac{\partial(\phi_1,\ldots,\phi_m)}{\partial(x_{n-m+1},\ldots,x_n)}(x^0)\neq 0.
\]
Then there exists a neighborhood $\widetilde{U}$ of $x^0$, and open set
$W\subseteq\RR^{n-m}$ containing $x_a^0$, and functions $f_1,\ldots,f_m$ of
class ${\mathcal C}^k$ on $W$ such that 
\[
\Phi(x_1,\ldots,x_{n-m},f_1(x_a),\ldots,f_m(x_a))=0\quad \text{for every}\ x_a\in W.
\]
Here, $x_a=(x_1,\ldots,x_{n-m})$. Furthermore, $f_1,\ldots,f_m$ are the unique functions
satisfying 
\[
\left\{x\in\widetilde{U}\mid\Phi(x)=0\right\}=\left\{x\in\widetilde{U}\mid x_a\in W, x_{n-m+k}=f_k(x_a)\ 
\text{for}\ k=1,\ldots,m\right\}.
\]
\end{theorem}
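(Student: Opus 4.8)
The statement is the classical Implicit Function Theorem, quoted verbatim from \cite{ImplicitFunctionTh}, so the plan is simply to recall its standard proof, reducing it to the Inverse Function Theorem. First I would thicken $\Phi$ to a map between equal-dimensional spaces: define $F\colon U\to\RR^n$ by
\[
F(x_1,\ldots,x_n)=\bigl(x_1,\ldots,x_{n-m},\ \phi_1(x),\ldots,\phi_m(x)\bigr).
\]
This $F$ is of class $\mathcal{C}^k$, and its Jacobian matrix at $x^0$ is block lower triangular with the $(n-m)\times(n-m)$ identity in the top-left block and $\partial(\phi_1,\ldots,\phi_m)/\partial(x_{n-m+1},\ldots,x_n)(x^0)$ in the bottom-right block; hence $\det DF(x^0)$ equals the nonvanishing Jacobian determinant assumed in the hypothesis, so $DF(x^0)$ is invertible.

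Next I would apply the Inverse Function Theorem to $F$: there is an open neighbourhood $\widetilde U\ni x^0$ on which $F$ is a $\mathcal{C}^k$ diffeomorphism onto an open set $F(\widetilde U)$, with $\mathcal{C}^k$ inverse $G$. Since $F$ leaves the first $n-m$ coordinates fixed, so does $G$; write $G(x_a,y)=\bigl(x_a,\,g_1(x_a,y),\ldots,g_m(x_a,y)\bigr)$. For $x\in\widetilde U$ the equation $\Phi(x)=0$ is equivalent to $F(x)=(x_a,0)$, i.e.\ to $x=G(x_a,0)$; so, setting $W:=\{x_a\in\RR^{n-m}\mid (x_a,0)\in F(\widetilde U)\}$ (an open set containing $x_a^0$ because $F(x^0)=(x_a^0,0)$) and $f_k(x_a):=g_k(x_a,0)$, the functions $f_1,\ldots,f_m$ are $\mathcal{C}^k$ on $W$, satisfy $\Phi(x_1,\ldots,x_{n-m},f_1(x_a),\ldots,f_m(x_a))=0$, and the displayed description of the zero set of $\Phi$ on $\widetilde U$ is exactly the bijectivity of $F$; uniqueness of the $f_k$ is then immediate from that description.

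The work is thus pushed into the Inverse Function Theorem, and that is where the only real obstacle lies. After an affine normalization I may take $x^0=0$, $F(0)=0$, $DF(0)=I$; for $y$ near $0$ one solves $F(x)=y$ by observing that $T_y(x):=x-F(x)+y$ is a contraction on a small closed ball (its derivative $I-DF(x)$ is small near $0$ by continuity of $DF$), so the Banach fixed point theorem produces a unique inverse $G$, and an elementary estimate shows $G$ is Lipschitz, hence continuous. Differentiability of $G$, with $DG(y)=DF(G(y))^{-1}$, follows from the definition of the derivative together with invertibility of $DF$ near $0$; an induction on $k$, using that $A\mapsto A^{-1}$ is smooth on invertible matrices and $DF\in\mathcal{C}^{k-1}$, upgrades $G$ to class $\mathcal{C}^k$. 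The delicate step — the one actually worth spelling out — is precisely this bootstrap from ``$G$ continuous'' to ``$G\in\mathcal{C}^k$''; everything else is linear algebra and the contraction principle. (One could equally skip $F$ and run the same contraction directly on $x_b\mapsto x_b-B^{-1}\Phi(x_a,x_b)$ with $B=\partial\Phi/\partial x_b(x^0)$, parametrized by $x_a$, obtaining the $f_k$ directly; the smoothness bookkeeping is identical.)
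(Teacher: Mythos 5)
The paper does not prove this theorem at all: it is quoted verbatim as \cite[Theorem~3.3.1]{ImplicitFunctionTh} and used as a black box in the proof of Lemma~\ref{lem::rank}, so there is no internal argument to compare your proof against. That said, your proof is the standard and correct one: augmenting $\Phi$ to the map $F(x)=(x_1,\ldots,x_{n-m},\phi_1(x),\ldots,\phi_m(x))$, noting the block-triangular Jacobian makes $DF(x^0)$ invertible, invoking the Inverse Function Theorem, and reading off $f_k(x_a)=g_k(x_a,0)$ from the inverse $G$ is exactly how the Implicit Function Theorem is reduced to the Inverse Function Theorem in essentially every textbook treatment (including, in all likelihood, the cited source). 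Your sketch of the Inverse Function Theorem via the contraction $T_y(x)=x-F(x)+y$ and the $\mathcal{C}^k$ bootstrap using smoothness of matrix inversion is also correct, and you correctly identify the bootstrap as the only nontrivial step. There is nothing to flag; the only caveat is that this is a proof of a cited background result, not a reconstruction of an argument appearing in the paper.
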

Let $J(u)=\{j_1,\ldots,j_l\}$ and
\[
A(u):=\left[\begin{array}{ccc}
\frac{\partial h_1^{hom}}{\partial x_1}(0,u) & \cdots & \frac{\partial h_1^{hom}}{\partial x_n}(0,u)\\
\vdots & \vdots & \vdots\\
\frac{\partial h_{m_1}^{hom}}{\partial x_1}(0,u) & \cdots & \frac{\partial h_{m_1}^{hom}}{\partial x_n}(0,u)\\
\frac{\partial g_{j_1}^{hom}}{\partial x_1}(0,u) & \cdots & \frac{\partial g_{j_1}^{hom}}{\partial x_n}(0,u)\\
\vdots & \vdots & \vdots\\
\frac{\partial g_{j_l}^{hom}}{\partial x_1}(0,u) & \cdots & \frac{\partial g_{j_l}^{hom}}{\partial x_n}(0,u)\\
\end{array}\right]
=\left[\begin{array}{ccc}
\frac{\partial \hat{h}_1}{\partial x_1}(u) & \cdots & \frac{\partial \hat{h}_1}{\partial x_n}(u)\\
\vdots & \vdots & \vdots\\
\frac{\partial \hat{h}_{m_1}}{\partial x_1}(u) & \cdots & \frac{\partial \hat{h}_{m_1}}{\partial x_n}(u)\\
\frac{\partial \hat{g}_{j_1}}{\partial x_1}(u) & \cdots & \frac{\partial \hat{g}_{j_1}}{\partial x_n}(u)\\
\vdots & \vdots & \vdots\\
\frac{\partial \hat{g}_{j_l}}{\partial x_1}(u) & \cdots & \frac{\partial \hat{g}_{j_l}}{\partial x_n}(u)\\
\end{array}\right]
\]
Recall that $\hat{h}_i$ and $\hat{g}_j$ denote the homogeneous parts of the
highest degree of $h_i$ and $g_j$, respectively. Combining Lemma
\ref{lem::delta} and the Implicit Function Theorem, we have 
\begin{lemma}\label{lem::rank}
Suppose $S$ is not closed at $\infty$ and $m_1+\vert J(u)\vert<n+1$, then
$\text{rank}\ A(u)<m_1+\vert J(u)\vert$.
\end{lemma}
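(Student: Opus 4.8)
The plan is to argue by contradiction. Suppose $\text{rank}\, A(u)=m_1+\vert J(u)\vert$; note this is the maximal possible rank of the $(m_1+\vert J(u)\vert)\times n$ matrix $A(u)$ precisely because the hypothesis $m_1+\vert J(u)\vert<n+1$ gives $m:=m_1+\vert J(u)\vert\le n$. I will derive from this a point of $V(u)$ arbitrarily close to $(0,u)$ with strictly positive $x_0$-coordinate, contradicting Lemma \ref{lem::delta}.

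First I would set up the map
\[
\Phi(\tilde x):=\bigl(h_1^{hom}(\tilde x),\ldots,h_{m_1}^{hom}(\tilde x),\,
g_{j_1}^{hom}(\tilde x),\ldots,g_{j_l}^{hom}(\tilde x)\bigr)\colon\ \RR^{n+1}\to\RR^{m},
\]
so that $V(u)$ is locally the zero set of $\Phi$ and $\Phi(0,u)=0$. As recorded in the displayed identity defining $A(u)$, the partial derivatives of $\Phi$ with respect to $x_1,\ldots,x_n$ at $(0,u)$ are exactly the entries of $A(u)$ (the $x_0$-column is dropped), using that $g_j^{hom}(x_0,x)=\hat g_j(x)+x_0\,\phi_j(x_0,x)$ and likewise for $h_i^{hom}$. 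Since by assumption $A(u)$ has full row rank $m$, there is a subset $I\subseteq\{1,\ldots,n\}$ with $\vert I\vert=m$ such that the $m\times m$ submatrix of $A(u)$ formed by the columns indexed by $I$ is invertible. The key structural point is that $x_0$ is then \emph{not} among these $m$ distinguished coordinates, so it is available as a free variable; this is exactly why the lemma is phrased via $A(u)$ rather than via the full Jacobian of $\Phi$ in all $n+1$ variables.

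Now I would apply the Implicit Function Theorem (Theorem \ref{th::implicitfunctionth}) to $\Phi$ on $\RR^{n+1}$, with the $m$ dependent coordinates being $\{x_k:k\in I\}$ and the remaining $n+1-m\ge 1$ coordinates — which include $x_0$ — being free. Since the relevant Jacobian determinant is nonzero at $(0,u)$, the theorem yields a neighborhood of $(0,u)$ and $\mathcal{C}^k$ functions expressing the dependent coordinates as functions of the free ones on an open set $W$ containing the free part of $(0,u)$, whose graph is exactly $V(u)$ near $(0,u)$. Fixing the free coordinates other than $x_0$ at their values in $(0,u)$ and letting $x_0=\varepsilon$ for small $\varepsilon>0$ keeps the point in $W$; reading off the dependent coordinates from the graph produces, for every sufficiently small $\varepsilon>0$, a point of $V(u)$ that tends to $(0,u)$ as $\varepsilon\to0^+$ and has $x_0=\varepsilon>0$. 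For $\varepsilon$ small enough this point lies in $B((0,u),\delta)\cap V(u)$, contradicting Lemma \ref{lem::delta}. Hence $\text{rank}\, A(u)<m_1+\vert J(u)\vert$. The only delicate step is the bookkeeping in the previous paragraph ensuring $x_0$ is a free variable and that at least one free variable survives — both of which are guaranteed by full row rank of $A(u)$ together with $m_1+\vert J(u)\vert<n+1$; the rest is a routine unwinding of the graph near $(0,u)$.
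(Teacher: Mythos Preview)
Your proof is correct and follows essentially the same route as the paper: both argue by contradiction, use that full row rank of $A(u)$ yields $m$ linearly independent columns among $x_1,\ldots,x_n$ (so $x_0$ is necessarily a free variable), apply the Implicit Function Theorem to parametrize $V(u)$ locally over these free variables, and then vary $x_0$ to a small positive value to contradict Lemma~\ref{lem::delta}. Your write-up makes the role of $x_0$ as a free variable slightly more explicit, but the argument is the same.
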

\begin{proof}
Let $m=m_1+\vert J(u)\vert$. Suppose $\text{rank}\ A(u)=m$.  Then there exist
$m$ independent columns in $A(u)$. Without loss of generality, we assume the
last $m$ columns of $A(u)$ are independent, i.e. the Jacobian determinant
\[
\frac{\partial(h^{hom}_1,\ldots,h^{hom}_{m_1},g_{j_1}^{hom},\ldots,g_{j_l}^{hom})}{
\partial(x_{n-m+1},\ldots,x_n)}(0,u)\neq 0.
\]
Partition $\tilde{u}=(0,u)$ as $(\tilde{u}^a,\tilde{u}^b)$ where 
$\tilde{u}^a=(0,u_1,\ldots,u_{n-m}), \tilde{u}^b=(u_{n-m+1},\ldots,u_n)$.
Then by the Implicit Function Theorem \ref{th::implicitfunctionth}, 
there exists an open set $W\subseteq \RR^{n-m+1}$ containing $\tilde{u}^a$, 
and functions $f_1,\ldots,f_m$ of class ${\mathcal C}^k$ on $W$ such that
\begin{equation*}
\begin{aligned}
&h^{hom}_i(x_0,\ldots,x_{n-m},f_1(\tilde{x}^a),\ldots,f_m(\tilde{x}^a))=0,\ i=1,\ldots,m_1,\\
&g^{hom}_{j}(x_0,\ldots,x_{n-m},f_1(\tilde{x}^a),\ldots,f_m(\tilde{x}^a))=0,\ j\in J(u),\\
\end{aligned}
\end{equation*}
for every $\tilde{x}^a\in W$. Here, $\tilde{x}^a=(x_0,\ldots,x_{n-m})$.
Therefore, $(\tilde{x}^a,f_1(\tilde{x}^a),\ldots,f_m(\tilde{x}^a))\in V(u)$ for
every $\tilde{x}^a\in W$. Since $W$ is open and $f_1,\ldots,f_m$ are
continuous, we can choose $\tilde{x}^a$ very close to $\tilde{u}^a$ such that
$(\tilde{x}^a,f_1(\tilde{x}^a),\ldots,f_m(\tilde{x}^a))\in B((0,u),\delta)\cap
V(u)$ with $x_0>0$ for any $\delta>0$, which contradicts the conclusion in
Lemma \ref{lem::delta}.
\end{proof}
The following theorem shows that if the defining polynomials of $S$ are generic, 
then $S$ is closed at $\infty$.
\begin{theorem}\label{th}
Suppose $S$ is not closed at $\infty$, then
\begin{enumerate}[\upshape (a)]
\item if $m_1+\vert J(u)\vert\ge n+1$, then $\text{Res}(h^{hom}_1,\ldots,
h^{hom}_{m_1},g_{j_1}^{hom},\ldots,g_{j_{n-m_1+1}}^{hom})=0$ for
every $\{j_1,\ldots,j_{n-m_1+1}\}\subseteq J(u)$;
\item if $m_1+\vert J(u)\vert=n$, then 
$\text{Res}(\hat{h}_1,\ldots,\hat{h}_{m_1},\hat{g}_{j_1},\ldots,\hat{g}_{j_l})=0$;
\item if $m_1+\vert J(u)\vert<n$, then 
$\Delta(\hat{h}_1,\ldots,\hat{h}_{m_1},\hat{g}_{j_1},\ldots,\hat{g}_{j_l})=0$.
\end{enumerate}
\end{theorem}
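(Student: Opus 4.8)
The plan is to combine Lemma~\ref{lem::rank} with the algebraic characterizations of resultants and discriminants recalled above, splitting into the three cases according to how $m_1+\vert J(u)\vert$ compares with $n$. The common engine is the point $\tilde{u}=(0,u)\in\widetilde{S}$ with $u\neq 0$: since the homogenized polynomials $h_i^{hom},g_j^{hom}$ vanish at $(0,u)$ with $x_0=0$, and since $h_i^{hom}(0,x)=\hat h_i(x)$ and $g_j^{hom}(0,x)=\hat g_j(x)$ by the very definition of homogenization, the point $u\in\RR^n\subseteq\CC^n$ is a nonzero common complex zero of all the homogeneous polynomials $\hat h_1,\ldots,\hat h_{m_1},\hat g_{j_1},\ldots,\hat g_{j_l}$ in the $n$ variables $x_1,\ldots,x_n$. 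That single observation already does most of the work; the rest is bookkeeping about how many polynomials we feed into $\text{Res}$ or $\Delta$.

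For part~(c), when $m_1+\vert J(u)\vert<n$, we have a system of $l':=m_1+\vert J(u)\vert<n$ homogeneous polynomials $\hat h_1,\ldots,\hat h_{m_1},\hat g_{j_1},\ldots,\hat g_{j_l}$ in $n$ variables with a nonzero complex solution $u$; by Lemma~\ref{lem::rank} (whose hypothesis $m_1+\vert J(u)\vert<n+1$ is certainly met) the Jacobian matrix $A(u)$ has rank strictly less than $l'=m_1+\vert J(u)\vert$, i.e.\ the Jacobian of $\hat h_1,\ldots,\hat g_{j_l}$ fails to have full rank at $u$. This is exactly the condition appearing in the defining property of the discriminant recalled above, so $\Delta(\hat h_1,\ldots,\hat h_{m_1},\hat g_{j_1},\ldots,\hat g_{j_l})=0$. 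For part~(b), when $m_1+\vert J(u)\vert=n$, we have exactly $n$ homogeneous polynomials in $n$ variables with the nonzero complex common zero $u$, so the resultant $\text{Res}(\hat h_1,\ldots,\hat h_{m_1},\hat g_{j_1},\ldots,\hat g_{j_l})$ vanishes directly by its defining property — Lemma~\ref{lem::rank} is not even needed here. For part~(a), when $m_1+\vert J(u)\vert\ge n+1$, we pick any subset $\{j_1,\ldots,j_{n-m_1+1}\}\subseteq J(u)$ of size $n-m_1+1$ (legal since $\vert J(u)\vert\ge n-m_1+1$); then $h_1^{hom},\ldots,h_{m_1}^{hom},g_{j_1}^{hom},\ldots,g_{j_{n-m_1+1}}^{hom}$ is a list of $n+1$ homogeneous polynomials in the $n+1$ variables $\tilde x=(x_0,x_1,\ldots,x_n)$, and $\tilde u=(0,u)\neq 0$ is a common complex zero of all of them, so again by the defining property of the resultant $\text{Res}(h_1^{hom},\ldots,h_{m_1}^{hom},g_{j_1}^{hom},\ldots,g_{j_{n-m_1+1}}^{hom})=0$.

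One technical point to handle carefully is the degree/variable conventions: in part~(a) the resultant is taken with respect to the $n+1$ variables $\tilde x$ and the polynomials $h_i^{hom}$, $g_j^{hom}$ are homogeneous in $\tilde x$ by construction, whereas in parts~(b) and~(c) the resultant and discriminant are taken with respect to the $n$ variables $x$ and the relevant polynomials are the leading forms $\hat h_i$, $\hat g_j$, which are homogeneous in $x$. The passage between these two pictures is precisely the identities $h_i^{hom}(0,x)=\hat h_i(x)$ and $g_j^{hom}(0,x)=\hat g_j(x)$, which let us transfer the nonzero common zero $(0,u)$ of the $(n+1)$-variable forms into a nonzero common zero $u$ of the $n$-variable forms whenever $x_0$ is already forced to be $0$. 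I would state this identity explicitly at the start of the proof so each of the three cases is a one-line invocation afterward.

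The main obstacle — or rather the one place demanding genuine care rather than routine invocation — is part~(c): one must check that the hypothesis of Lemma~\ref{lem::rank} really applies (it does, since $m_1+\vert J(u)\vert<n<n+1$) and, more importantly, that "rank $A(u)<m_1+\vert J(u)\vert$" is literally the rank-deficiency clause in the definition of $\Delta$ for the list $\hat h_1,\ldots,\hat h_{m_1},\hat g_{j_1},\ldots,\hat g_{j_l}$ of $m<n$ homogeneous polynomials in $n$ variables. Everything else (parts~(a) and~(b)) reduces immediately to the existence of the nonzero complex common root, which we already have in hand from $(0,u)\in\widetilde S\setminus\{0\}$, with no appeal to Lemma~\ref{lem::rank} at all.
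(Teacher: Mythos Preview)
Your proposal is correct and follows essentially the same route as the paper's proof: parts~(a) and~(b) are obtained directly from the existence of the nonzero common zero $(0,u)$ (resp.\ $u$) via the defining property of the resultant, while part~(c) invokes Lemma~\ref{lem::rank} to get rank deficiency of $A(u)$ and then the defining property of the discriminant. Your write-up is more explicit than the paper's about the variable conventions and the identities $h_i^{hom}(0,x)=\hat h_i(x)$, $g_j^{hom}(0,x)=\hat g_j(x)$, but the underlying argument is identical.
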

\begin{proof}
Since $h_i^{hom}(0,u)=\hat{h}_i(u)=0,\ g_j^{hom}(0,u)=\hat{g}_j(u)=0$ for all
$i\in[m_1], j\in J(u)$, then the conclusions in (a) and (b) are implied by the
proposition of resultants. If $m_1+\vert J(u)\vert<n$, then by Lemma
\ref{lem::rank}, the Jacobian matrix of
$(\hat{h}_1,\ldots,\hat{h}_{m_1},\hat{g}_{j_1},\ldots\\
,\hat{g}_{j_l})$ does not
have full rank at $u$. Hence, the conclusion in (c) follows by the proposition
of discriminants.
\end{proof}

In this section, we reformulate the minimization of (\ref{pro::rf}) as the
polynomial optimization (\ref{pro::homo}) by homogenization.  Suppose $S$ is
closed at $\infty$ which is generic and always true when $S=\RR^n$, then
$r^*=s^*$. The relations between the achievabilities of $r^*$ and $s^*$ are
discussed in Proposition \ref{thm::main2}.  Now the problem becomes how to
efficiently solve polynomial optimization (\ref{pro::homo}).  Recently, there
has been much work on solving polynomial optimization with or without
constraints via SOS relaxation.  In next section, we introduce the Jacobian SDP
relaxation \cite{exactJacNie} and show that the assumptions under which the
Jacobian SDP relaxation is exact can be weakened.

\section{Jacobian SDP Relaxation Applicable to Finite Real
Singularities}\label{sec::exactrelax}

Consider the following polynomial optimization problem
\begin{equation}\label{pro::opti}
\left\{
\begin{aligned}
f_{min}:=\underset{x\in\RR^n}{\min}&\ f(x)\\
\text{s.t.}&\ h_1(x)=\cdots=h_{m_1}(x)=0,\\
&\ g_1(x)\ge 0, \ldots, g_{m_2}(x)\ge 0.
\end{aligned}\right.
\end{equation}
where $f(x),h_i(x),g_j(x)\in\RR[x_1,\ldots,x_n]$.  In this section, we first
introduce the exact Jacobian SDP relaxation proposed in \cite{exactJacNie}.
Then we present our contribution in this section by giving a weakened
assumption under which the relaxation in \cite{exactJacNie} is still exact.

Let $m=\min\{m_1+m_2,n-1\}$.  For convenience, denote
$h(x)=(h_1(x),\ldots,h_{m_1}(x))$ and $g(x)=(g_1(x),\ldots,g_{m_2}(x))$. For a
subset $J=\{j_1,\ldots,j_k\}\subseteq [m_2]$, denote
$g_J(x)=(g_{j_1}(x),\ldots,g_{j_k}(x))$. Symbols $\nabla h(x)$ and $\nabla g_J(x)$
represent the gradient vectors of the polynomials in $h(x)$ and $g_J(x)$,
respectively. Denote the determinantal variety of
$(f,h,g_J)$'s Jacobian being singular by
\[
G_J=\left\{x\in\CC^n\mid\text{rank}\ B^J(x)\le m_1+|J|\right\},
\quad B^J(x)=\left[\nabla f(x)\quad\nabla h(x)\quad\nabla
g_J(x)\right].
\]
For every $J=\{j_1,\ldots,j_k\}\subseteq [m_2]$ with $k\le m-m_1$, let
$\eta^J_1,\ldots,\eta^J_{len(J)}$ be the set of defining polynomials for $G_J$
where $len(J)$ is the number of these polynomials.  See \cite[Section
2.1]{exactJacNie} about minimizing the number of defining equations for
determinantal varieties. For each $i=1,\ldots,len(J)$, define 
\begin{equation}\label{def::varphi}
\varphi_i^J(x)=\eta^J_i\cdot\underset{j\in J^c}{\prod}g_j(x),\ 
\text{where}\ J^c=[m_2]\backslash J.
\end{equation}
For simplicity, we list all possible $\varphi^J_i$ in (\ref{def::varphi})
sequentially as 
\begin{equation}\label{list::varphi}
\varphi_1,\varphi_2,\ldots,\varphi_r,\ \text{where}\ 
r=\underset{J\subseteq[m_2],|J|\le m-m_1}{\sum}len(J).
\end{equation}
Define the variety
\begin{equation}\label{def::W}
W:=\{x\in\CC^n\mid\ h_1(x)=\cdots=h_{m_1}(x)=\varphi_1(x)=\cdots=\varphi_r(x)=0\}.
\end{equation}
We consider the following optimization
\begin{equation}\label{pro::jacobian}
\left\{
\begin{aligned}
f^*:=\underset{x\in\RR^n}{\min}&\ f(x)\\
\text{s.t.}&\ h_i(x)=0\ (i=1,\ldots,m_1),\ \varphi_j(x)=0\ (j=1,\ldots,r),\\
&\ g_{\nu}(x)\ge 0, \forall \nu\in\{0,1\}^{m_2},
\end{aligned}\right.
\end{equation}
where $g_{\nu}=g_1^{\nu_1}\cdots g_{m_2}^{\nu_{m_2}}$.

We now construct $N$-th order SDP relaxation \cite{LasserreGlobal2001}
for (\ref{pro::jacobian}) and its dual problem. Let $\psi(x)$ be a
polynomial with $\deg{(\psi)}\le 2N$ and define symmetric matrices
$A^{(N)}_{\alpha}$ such that 
\[
\psi(x)[x]_d[x]_d^T=\underset{\alpha\in{\mathbb N}^n:|\alpha|\le 2N}{\sum}
A^{(N)}_{\alpha}x^{\alpha},\ \text{where}\ d=N-\lceil\deg{(\psi)}/2\rceil.
\]
Then the $N$-th order localizing moment matrix of $\psi$ is defined as
\begin{equation}\label{eq::momentmatrix}
L^{(N)}_{\psi}(y)=\underset{\alpha\in{\mathbb N}^n:|\alpha|\le 2N}{\sum}
A^{(N)}_{\alpha}y_{\alpha},
\end{equation}
where $y$ is a moment vector indexed by $\alpha\in{\mathbb N}^n$ with 
$|\alpha|\le 2N$. Denote 
\[
L_f(y)=\underset{\alpha\in{\mathbb N}^n:|\alpha|\le
\deg{(f)}}{\sum}f_{\alpha}y_{\alpha}\quad \text{for}\quad
f(x)=\underset{\alpha\in{\mathbb N}^n:|\alpha|\le
\deg{(f)}}{\sum}f_{\alpha}x^{\alpha}.
\]
The $N$-th order SDP relaxation \cite{LasserreGlobal2001} for
(\ref{pro::jacobian}) is the SDP
\begin{equation}\label{relax::primal}
\left\{
\begin{aligned}
f_N^{(1)}:=\min&\ L_f(y)\\
\text{s.t.}&\ L_{h_i}^{(N)}(y)=0\ (i=1,\ldots,m_1),\ L_{\varphi_j}^{(N)}(y)=0\
 (j=1,\ldots,r),\\
&\ L_{g_{\nu}}^{(N)}\succeq 0, \forall \nu\in \{0,1\}^{m_2}, y_0=1.
\end{aligned}\right.
\end{equation}
Now we present the dual of (\ref{relax::primal}). Define the truncated preordering 
$P^{(N)}$ generated by $g_j$ as 
\begin{equation*}
P^{(N)}=\left\{
\underset{\nu\in\{0,1\}^{m_2}}{\sum}\sigma_{\nu}g_{\nu}\Bigg|
\begin{aligned}
&\deg{(\sigma_{\nu}g_{\nu})}\le 2N\\
&\sigma_{\nu}\text{'s are SOS} 
\end{aligned}
\right\},
\end{equation*}
and the truncated ideal $I^{(N)}$ generated by $h_i$ and $\varphi_j$ as
\begin{equation*}
I^{(N)}=\left\{
\sum\limits_{i=1}^{m_1}\psi_ih_i+\sum\limits_{j=1}^r\phi_j\varphi_j\Bigg| 
\begin{aligned}
&\deg{(\psi_ih_i)}\le 2N\ \forall i\\
&\deg{(\phi_j\varphi_j)}\le 2N\ \forall j
\end{aligned}
\right\}.
\end{equation*}
It is shown \cite{LasserreGlobal2001} that the dual of (\ref{relax::primal}) 
is the following SOS relaxation for (\ref{pro::jacobian}):
\begin{equation}\label{relax::dual}
\left\{
\begin{aligned}
f^{(2)}_N:=\max&\ \gamma\\ 
\text{s.t.}&\ f(x)-\gamma\in I^{(N)}+P^{(N)}.
\end{aligned}\right.
\end{equation}
By weak duality, we have $f^{(2)}_N\le f^{(1)}_N\le f^*$.  
For any subset $J=\{j_1,\ldots,j_k\}\subseteq [m_2]$, let 
\[
V(h,g_J)=\{x\in\CC^n\mid h_i(x)=0,\ g_j(x)=0,\ i=1,\ldots,m_1,\ j\in J\}. 
\] 
We make the following assumption.

\begin{assumption}\label{assump::original}
\begin{inparaenum}[\upshape(i\upshape)]
\item\label{item::a1} $m_1\le n$. 
\item\label{item::a2} For any feasible point $u$, at most $n-m_1$ of $g_1(u),\ldots,g_{m_2}(u)$ vanish. 
\item\label{item::a3} For every $J=\{j_1,\ldots,j_k\}\subseteq [m_2]$ with $k\le n-m_1$,
Jacobian $[\nabla h\quad\nabla g_J]$ has full rank on $V(h,g_J)$.
\end{inparaenum}
\end{assumption}
Under the above assumption, the following main result is shown in \cite{exactJacNie}.
\begin{theorem}{\upshape\cite[Theorem 2.3]{exactJacNie}}\label{thm::main::Nie}
Suppose Assumption \ref{assump::original} holds. Then $f^*>-\infty$ and there 
exists $N^*\in{\mathbb N}$ such that $f^{(1)}_N=f^{(2)}_N=f^*$ for all $N\ge N^*$.
Furthermore, if the minimum $f_{min}$ of $(\ref{pro::opti})$ is achievable,
then $f^{(1)}_N=f^{(2)}_N=f_{min}$ for all $N\ge N^*$.
\end{theorem}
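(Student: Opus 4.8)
The plan is to reduce (\ref{pro::jacobian}) to a ``finite--value'' optimization and then invoke finite convergence of Lasserre's relaxation for such problems. I would proceed in three stages: a first--order optimality step linking the feasible sets of (\ref{pro::opti}) and (\ref{pro::jacobian}); the structural core, that $f$ takes only finitely many values on the feasible set of (\ref{pro::jacobian}); and an exact, degree--bounded Positivstellensatz giving $f^{(1)}_N=f^{(2)}_N=f^*$ for $N$ large.

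\emph{First--order step.} Taking $\nu=e_j$ shows every point feasible for (\ref{pro::jacobian}) satisfies $g_j\ge 0$ and $h_i=0$, hence is feasible for (\ref{pro::opti}); thus $f^*\ge f_{min}$ unconditionally. Conversely, let $u$ be a local minimizer of (\ref{pro::opti}) with active set $A=\{j\in[m_2]\mid g_j(u)=0\}$. By part (ii) of Assumption~\ref{assump::original}, $|A|\le n-m_1$, and by part (iii), $[\nabla h(u)\ \nabla g_A(u)]$ has full rank, so the linear--independence constraint qualification holds and the KKT conditions give that $\nabla f(u)$ lies in the span of $\{\nabla h_i(u)\}\cup\{\nabla g_j(u):j\in A\}$. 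For any $J\subseteq[m_2]$ occurring in the construction (\ref{def::varphi})--(\ref{list::varphi}): if $A\subseteq J$, then $B^J(u)$ has rank $\le m_1+|J|$, so $u\in G_J$ and $\eta_i^J(u)=0$ for all $i$; if $A\not\subseteq J$, then any $j_0\in A\setminus J$ lies in $J^c$, so the vanishing factor $g_{j_0}(u)=0$ kills $\varphi_i^J$. Either way $\varphi_\ell(u)=0$ for all $\ell$, and since $h_i(u)=0$ and $g_\nu(u)=\prod_j g_j(u)^{\nu_j}\ge 0$, the point $u$ is feasible for (\ref{pro::jacobian}). In particular, when $f_{min}$ is achieved at $u$ we get $f^*\le f(u)=f_{min}$, hence $f^*=f_{min}$.

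\emph{The finiteness core (main obstacle).} The key claim is that $f$ is constant on every irreducible component $Z$ of the complex variety $W$ of (\ref{def::W}). Granting this, since $W$ has finitely many components and the feasible set of (\ref{pro::jacobian}) is contained in $W\cap\RR^n$, the objective takes only finitely many real values there; in particular it is bounded below, so $f^*>-\infty$ (with $f^*=+\infty$ if the feasible set is empty) and $f^*$ is attained. To prove the claim I would stratify $W\cap\RR^n$ by active set $J(x)=\{j\mid g_j(x)=0\}$, which on the \emph{feasible} real part of $W$ has size $\le n-m_1$ by part (ii) of Assumption~\ref{assump::original}. Points with $|J(x)|=n-m_1$ satisfy $m_1+|J(x)|=n$ independent equations $h_i=g_j=0$ by part (iii), hence are isolated in $W$ and contribute finitely many values. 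At a point $x$ with $|J(x)|<n-m_1$, on the local piece of $W$ on which exactly the constraints indexed by $J(x)$ vanish, the relation $\varphi_i^{J(x)}=\eta_i^{J(x)}\cdot(\text{positive factor})=0$ forces $\eta_i^{J(x)}=0$ there, i.e.\ $\text{rank}\,B^{J(x)}\le m_1+|J(x)|$; combined with the full rank of $[\nabla h\ \nabla g_{J(x)}]$ (part (iii)), this puts $\nabla f$ in the row space of the Jacobian of $(h,g_{J(x)})$, whose components all vanish on the piece, so $df$ annihilates its tangent space and $f$ is locally constant; a connectedness argument over the strata and over the irreducible components of $W$ then gives global constancy. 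Making the stratification and the dimension counts rigorous — so that the Jacobian ranks above really are as claimed on each component, which is exactly where part (iii) is needed in full force — is the delicate point, and is essentially the key lemma of \cite{exactJacNie}.

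\emph{The exact Positivstellensatz.} Write $f^*=v_1<v_2<\cdots<v_L$ for the finitely many values of $f$ on the feasible set of (\ref{pro::jacobian}). After intersecting with a large ball (harmless once this value set is finite), the portions of the feasible set on which $f>v_1$ are compact and $f-v_1$ is there bounded below by a positive constant, while near the $v_1$--stratum $f-v_1\ge 0$. Using an SOS partition of unity across the strata, Putinar's or Schm\"udgen's Positivstellensatz on each compact piece, and the defining relations $h_i=0$, $\varphi_j=0$ to pass to the full variety, one assembles an identity $f-f^*=\sum_i\psi_i h_i+\sum_j\phi_j\varphi_j+\sum_\nu\sigma_\nu g_\nu$ with $\sigma_\nu$ SOS and all summand degrees bounded by some $N^*$; that is, $f-f^*\in I^{(N)}+P^{(N)}$ for all $N\ge N^*$. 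Then $\gamma=f^*$ is feasible for (\ref{relax::dual}), so $f^{(2)}_N\ge f^*$, and with the weak--duality chain $f^{(2)}_N\le f^{(1)}_N\le f^*$ already recorded in the text we get $f^{(1)}_N=f^{(2)}_N=f^*$ for all $N\ge N^*$. Finally, if $f_{min}$ is achieved then $f^*=f_{min}$ by the first step, so $f^{(1)}_N=f^{(2)}_N=f_{min}$ for $N\ge N^*$. Upgrading the certificate from an approximate ``$f-f^*+\epsilon\in I^{(N)}+P^{(N)}$ for all $\epsilon>0$'' to the exact identity with a uniform degree bound is the second substantive technical ingredient.
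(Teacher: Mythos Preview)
This theorem is quoted from \cite{exactJacNie} and is not proved in the present paper; its proof structure is, however, visible through the revised version the paper does prove (Lemmas~\ref{lem::WK}--\ref{lem::infimumeq}, the revised Lemma~3.2, and Theorem~\ref{thm::exact}). Your three-stage outline --- KKT/feasibility link, finiteness of the values of $f$ on $W$, and a degree-bounded Positivstellensatz --- matches that structure closely, and the first-order step is carried out correctly.

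Two points deserve adjustment. First, the decomposition you need is of the \emph{complex} variety $W$ into subvarieties on which $f$ is constant (this is Nie's Lemma~3.2, reproved here in revised form). Your stratification by the real active set $J(x)$ and the ``positive factor'' trick only operate on $W\cap T$; it does not by itself yield constancy of $f$ on complex irreducible components, which is what the algebraic certificate requires. You are right that this is the delicate lemma, but the passage from your real argument to the complex statement is not automatic. Second, you do not need the exact identity $f-f^*\in I^{(N^*)}+P^{(N^*)}$. What Nie proves (Theorem~\ref{thm::exact} here) is that a \emph{single} $N^*$ works for $f-f^*+\varepsilon\in I^{(N^*)}+P^{(N^*)}$ for every $\varepsilon>0$; since $f^{(2)}_{N^*}$ is a supremum over admissible $\gamma$, this already gives $f^{(2)}_{N^*}\ge f^*-\varepsilon$ for all $\varepsilon$, hence $f^{(2)}_{N^*}\ge f^*$, and weak duality closes the chain. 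So the ``upgrade'' you flag as a second substantive ingredient is unnecessary.
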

According to Theorem \ref{thm::main::Nie}, it is possible to solve the
polynomial optimization $(\ref{pro::opti})$ {\itshape exactly} by a single SDP
relaxation, which was not known in the prior existing literature. It is also
shown in {\upshape\cite{exactJacNie}} that Assumption \ref{assump::original} is
generically true. It is the reason why we use this method to solve
$(\ref{pro::homo})$. We will show later in this paper that Assumption
\ref{assump::original} is always true for $(\ref{pro::homo})$ when the original
feasible set $S=\RR^n$, i.e. for the global minimization of a rational
function.  In the following of this section, we prove that the condition
(\ref{item::a3}) in Assumption \ref{assump::original} can always be weakened
such that the conclusions in Theorem \ref{thm::main::Nie} still hold.

\begin{define}\label{def::theta}
For every set $J=\{j_1,\ldots,j_k\}\subseteq [m_2]$ with $k\le n-m_1$,
let
\[
\Theta_J=\{x\in V(h,g_J)\mid\text{rank}\ \left[\nabla h\quad\nabla
g_J\right]<m_1+|J|\}\quad\text{and}\quad\Theta=\bigcup\limits_{J\subseteq
[m_2], |J|\le n-m_1}\Theta_J.
\]
\end{define} 

We next show that the Jacobian SDP relaxation \cite{exactJacNie} is still exact
under the following weakened assumption:

\begin{assumption}\label{assump}
\begin{inparaenum}[\upshape(i\upshape)]
\item\label{item::ns1} $m_1\le n$. 
\item\label{item::ns2} For any $u\in S$, at most $n-m_1$ of
$g_1(u),\ldots,g_{m_2}(u)$ vanish. 
\item\label{item::ns3} The set $\Theta$ is finite.
\end{inparaenum}
\end{assumption}

Let $K$ be the variety defined by the KKT conditions
\begin{equation*}
      K=\left\{\begin{array}{c|c}
	(x,\lambda,\mu)\in\CC^{n+m_1+m_2}&
        \begin{aligned}
        & \nabla f(x)=\underset{i=1}{\overset{m_1}{\sum}}\lambda_i\nabla h_i(x)
	+\underset{j=1}{\overset{m_2}{\sum}}\mu_j\nabla g_j(x)\\
        &h_i(x)=\mu_jg_j(x)=0, \forall (i,j)\in [m_1]\times [m_2]
        \end{aligned}
    \end{array}
    \right\}
\end{equation*}
and 
\[
K_x=\{x\in\CC^n\mid (x,\lambda,\mu)\in K\ \text{for some}\ \lambda, \mu\}.
\]
Under  Assumption \ref{assump::original}, \cite[Lemma 3.1]{exactJacNie}
states that $W=K_x$. We now improve this result as follows.
\begin{lemma}[Revised Version of Lemma 3.1 in \cite{exactJacNie}]
\label{lem::WK}
Under conditions {\upshape$(\ref{item::ns1})$} and
{\upshape$(\ref{item::ns2})$} in Assumption \ref{assump}, $W=\Theta\cup K_x$.
\end{lemma}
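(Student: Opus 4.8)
The plan is to prove the two inclusions $W \subseteq \Theta \cup K_x$ and $\Theta \cup K_x \subseteq W$ separately, following the structure of the original Lemma~3.1 in \cite{exactJacNie} but carefully tracking where nonsingularity was used and replacing it with the finiteness bookkeeping. First I would recall the defining data: $W$ is cut out by $h_1,\ldots,h_{m_1}$ together with all the products $\varphi_i^J = \eta_i^J \cdot \prod_{j \in J^c} g_j$, where $\eta_i^J$ are defining polynomials for the determinantal variety $G_J = \{\operatorname{rank} B^J(x) \le m_1 + |J|\}$. So $x \in W$ iff $h(x)=0$ and for every $J \subseteq [m_2]$ with $|J| \le m - m_1$, either some $g_j(x)=0$ for $j \in J^c$, or $x \in G_J$ (i.e. the Jacobian $[\nabla f\ \nabla h\ \nabla g_J](x)$ is rank-deficient).

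For the inclusion $\Theta \cup K_x \subseteq W$: if $x \in K_x$, the KKT gradient relation exhibits $\nabla f(x)$ as a linear combination of $\nabla h_i(x)$ and of those $\nabla g_j(x)$ with $\mu_j \ne 0$; setting $J_0 = \{j : g_j(x)=0\}$ (so all $\mu_j \ne 0$ indices lie in $J_0$ by complementarity), one checks that for each relevant $J$ either $J \not\supseteq$ the active set forces some $g_j(x)=0$ with $j \in J^c$, or else $B^J(x)$ has a nontrivial kernel coming from the multiplier relation, so $\varphi_i^J(x)=0$ either way. Here I expect to use condition (ii) of Assumption~\ref{assump} to ensure $|J_0| \le n - m_1 \le m - m_1$ so that the active constraints actually appear among the $J$'s being ranged over. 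If $x \in \Theta$, then $x \in \Theta_J$ for some $J$ with $|J| \le n-m_1$: then $h(x)=0$, $g_J(x)=0$, and $[\nabla h\ \nabla g_J](x)$ is already rank-deficient, a fortiori $[\nabla f\ \nabla h\ \nabla g_J](x)$ has rank $\le m_1+|J|$, so $x \in G_J$ and $\varphi_i^J(x)=0$; for the other index sets $J'$ one argues as before using that $g_J(x)=0$.

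For the reverse inclusion $W \subseteq \Theta \cup K_x$: take $x \in W$, let $J_0 = \{j : g_j(x)=0\}$ be the active set; by (ii), $|J_0| \le n-m_1$. Apply the $W$-membership conditions with $J = J_0$: since $g_j(x) \ne 0$ for all $j \in J_0^c$, we must have $x \in G_{J_0}$, i.e. $\operatorname{rank}[\nabla f\ \nabla h\ \nabla g_{J_0}](x) \le m_1 + |J_0|$. Now split into two cases. If $\operatorname{rank}[\nabla h\ \nabla g_{J_0}](x) < m_1 + |J_0|$, then by definition $x \in \Theta_{J_0} \subseteq \Theta$. Otherwise $[\nabla h\ \nabla g_{J_0}](x)$ has full column rank $m_1+|J_0|$, and since appending $\nabla f(x)$ does not increase the rank, $\nabla f(x)$ lies in the column span of $[\nabla h\ \nabla g_{J_0}](x)$; this produces multipliers $\lambda_i$ and $\mu_j$ (for $j \in J_0$, set $\mu_j=0$ for $j \notin J_0$) realizing the KKT gradient equation, and $h(x)=0$, $\mu_j g_j(x)=0$ hold by construction, so $x \in K_x$.

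The main obstacle, and the step deserving the most care, is the bookkeeping over \emph{all} subsets $J$ in the definition of $W$ versus the single active set $J_0$: one must verify that membership in $W$, which a priori only says something for each $J$ with $|J| \le m-m_1$, does give usable information precisely at $J = J_0$, and this is exactly where Assumption~\ref{assump}(ii) (at most $n-m_1$ constraints vanish, hence $|J_0| \le m-m_1$ since $m = \min\{m_1+m_2, n-1\}$) is indispensable — without it $J_0$ might not be among the index sets and the $\eta^{J_0}$ polynomials would not even be in the generating set. The subtlety distinguishing this revised lemma from the original is that condition (iii) of the old assumption is \emph{not} invoked anywhere above: the rank-deficient locus is simply absorbed into $\Theta$ rather than being assumed empty, which is the whole point of the weakening.
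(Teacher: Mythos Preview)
Your approach is essentially the paper's: it simply cites the proof of \cite[Lemma~3.1]{exactJacNie} for the inclusions $W\setminus\Theta\subseteq K_x\subseteq W$ (which your active-set dichotomy reconstructs) and then proves $\Theta\subseteq W$ by exactly the two-case split ($I\subseteq J$ versus $I\not\subseteq J$) you sketch. One small slip to fix: condition~(ii) gives $|J_0|\le n-m_1$, not $|J_0|\le m-m_1$ as you assert, since $m=\min\{m_1+m_2,\,n-1\}$ forces $m-m_1\le n-1-m_1$; in the boundary case $|J_0|=n-m_1$ the index set $J_0$ is \emph{not} among those generating $W$, but the conclusion $x\in G_{J_0}$ still holds trivially (the matrix $B^{J_0}$ then has $n+1$ columns and only $n$ rows), so your rank dichotomy goes through unchanged.
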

\begin{proof}
The proof of \cite[Lemma 3.1]{exactJacNie} shows that
$W\backslash\Theta\subseteq K_x\subseteq W$.  With a similar argument, we prove
$\Theta\subseteq W$. Recall that $B^J=[\nabla f(x)\quad\nabla h(x)\quad\nabla
g_J(x)]$.  Choose an arbitrary $u\in\Theta$ and let $u\in\Theta_I$ for some
$I\subseteq [m_2]$. If $I=\emptyset$, then $[\nabla h]$ and $B^J(u)$ are both
singular for any $J\subseteq [m_2]$, which implies $\varphi_i(u)=0$ and $u\in
W$. If $I\neq\emptyset$, write $I=\{i_1,\ldots,i_t\}$. Let
$J=\{j_1,\ldots,j_k\}\subseteq [m_2]$ be an arbitrary index set with $m_1+k\le
m$.

\noindent $\mathbf{Case}$ $I\not\subseteq J$\quad At least one $j\in J^c$ belongs
to $I$. By the choice of $I$ and the definition of $\varphi_i(x)$,
\[
\varphi_i^J(u)=\eta^J_i\cdot\underset{j\in J^c}{\prod}g_j(u)=0. 
\]
\noindent $\mathbf{Case}$ $I\subseteq J$\quad Then $[\nabla h\quad\nabla g_I]$
and $[\nabla f(x)\quad\nabla h(x)\quad\nabla g_J(x)]$ are both singular. Hence,
all polynomials $\varphi_i^J(x)$'s vanish at $u$.

Combining the above two cases, we have all $\varphi_i^J(x)$ vanish at $u$.
Thus, $u\in W$ which implies $W=\Theta\cup K_x$.
\end{proof}
\begin{lemma}\label{lem::infimumeq}
Under conditions {\upshape(\ref{item::ns1})} and {\upshape(\ref{item::ns2})} in
Assumption \ref{assump}, if the minimum $f_{min}$ of
{\upshape(\ref{pro::opti})} is achievable, then $f^*=f_{min}$.
\end{lemma}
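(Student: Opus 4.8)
The plan is to reduce the claim to the already-established result of Nie, Theorem~\ref{thm::main::Nie}, by exploiting the structure uncovered in Lemma~\ref{lem::WK}. Since $f_{min}$ is achievable, there is a minimizer $u^\ast\in S$ with $f(u^\ast)=f_{min}$. The feasible set of the Jacobian relaxation~(\ref{pro::jacobian}) is contained in $W\cap\RR^n$ (intersected with the sign conditions $g_\nu\ge 0$), and by Lemma~\ref{lem::WK} we have $W=\Theta\cup K_x$. The first thing I would check is that $u^\ast$ actually lies in the feasible set of~(\ref{pro::jacobian}): because $u^\ast$ is a minimizer of~(\ref{pro::opti}) it satisfies the KKT conditions (one must invoke a constraint qualification here — conditions (\ref{item::ns1}) and (\ref{item::ns2}) together with the fact that at a minimizer only $\le n-m_1$ of the $g_j$ vanish and, off $\Theta$, the active gradients are independent; at points of $\Theta$ one argues directly that $u^\ast\in W$ since all $\varphi_j$ vanish there by the case analysis in Lemma~\ref{lem::WK}). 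Either way $u^\ast\in W$, so $u^\ast$ is feasible for~(\ref{pro::jacobian}), giving $f^\ast\le f(u^\ast)=f_{min}$.

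For the reverse inequality $f^\ast\ge f_{min}$, I would take any point $v$ feasible for~(\ref{pro::jacobian}): then $v\in W\cap\RR^n$ and $g_\nu(v)\ge 0$ for all $\nu\in\{0,1\}^{m_2}$, which (taking the single-index $\nu$'s) forces $g_j(v)\ge 0$ for every $j$, and of course $h_i(v)=0$; hence $v\in S$, so $f(v)\ge f_{min}$. Taking the infimum over such $v$ yields $f^\ast\ge f_{min}$. Combining the two inequalities gives $f^\ast=f_{min}$.

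The one genuine subtlety — and where I expect the real work to be — is the verification that every feasible point of~(\ref{pro::jacobian}) is feasible for the original problem~(\ref{pro::opti}). The products-of-inequalities conditions $g_\nu\ge 0$ over all $\nu\in\{0,1\}^{m_2}$ do not \emph{a priori} force each $g_j\ge 0$; one recovers the individual sign conditions by an induction on $|\{j: g_j(v)<0\}|$, using that if an even number of the $g_j(v)$ were negative then some product $g_\nu$ with $\nu$ picking out exactly those indices would be positive while... — so in fact one needs the standard argument: if $g_j(v)<0$ for $j$ in a set $T$ with $|T|$ maximal among admissible, pairing up indices shows a contradiction unless $T=\emptyset$. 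This is exactly the classical fact that the preordering sign conditions cut out the same set as the individual ones, and it is clean; the only thing to be careful about is that condition (\ref{item::ns2}) restricts how many $g_j$ can vanish \emph{on $S$}, not on the enlarged feasible set, so the argument showing $v\in S$ must come first, before that bound is used. I would also note in passing that the achievability hypothesis is used only to produce the feasible minimizer $u^\ast$ for the upper bound; the lower bound $f^\ast\ge f_{min}$ holds regardless. Finally, $f^\ast$ is finite because $f^\ast=f_{min}=f(u^\ast)\in\RR$, which we will need when quoting this lemma in the convergence proof to follow.
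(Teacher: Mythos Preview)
Your argument is correct and follows essentially the same route as the paper: the inequality $f^*\ge f_{min}$ holds because the feasible set of~(\ref{pro::jacobian}) is contained in $S$ (the constraints $h_i=0$ are present, and the conditions $g_\nu\ge 0$ with $\nu=e_j$ give each $g_j\ge 0$), and the inequality $f^*\le f_{min}$ follows by showing a minimizer $u^\ast$ of~(\ref{pro::opti}) lies in $W=\Theta\cup K_x$ --- either $u^\ast\in\Theta$ directly, or LICQ holds at $u^\ast$ and the KKT conditions place it in $K_x$.

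Two remarks. First, your opening sentence about ``reducing the claim to Theorem~\ref{thm::main::Nie}'' is misleading: you never actually invoke that theorem, nor should you --- this lemma is an ingredient \emph{for} the revised convergence theorem, not a consequence of Nie's original result. Second, your final paragraph manufactures a difficulty that does not exist. You already observed, correctly, that taking the single-index vectors $\nu=e_j$ in the family $\{g_\nu\ge 0:\nu\in\{0,1\}^{m_2}\}$ yields $g_j\ge 0$ for each $j$; that settles the matter in one line. The subsequent talk of induction on $|\{j:g_j(v)<0\}|$, pairing indices, and invoking condition~(\ref{item::ns2}) is entirely unnecessary and partly incoherent (the half-finished sentence ``would be positive while\ldots'' suggests you sensed the argument was going nowhere). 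Drop that paragraph; the proof is complete without it, and the paper's own proof dispatches $f^*\ge f_{min}$ with the single phrase ``by the construction of~(\ref{pro::jacobian})''.
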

\begin{proof}
By the construction of (\ref{pro::jacobian}), $f^*\ge f_{min}$.  Suppose
$f_{min}=f(x^*)$ where $x^*$ is a feasible point of (\ref{pro::opti}).  If
$x^*\notin\Theta$, then the linear independence constraint qualification (LICQ)
is satisfied at $x^*$ which implies $x^*\in K_x$ \cite[Theorem
12.1]{NumericalOptimization}.  Since $W=\Theta\cup K_x$ by Lemma \ref{lem::WK},
we have $x^*\in W$ which implies $f^*=f_{min}$.
\end{proof}

Next we show that the conclusion in \cite[Lemma 3.2]{exactJacNie} still holds
under Assumption \ref{assump}.
\begin{lemma}[Revised Version of Lemma 3.2 in \cite{exactJacNie}]
Suppose Assumption \ref{assump} holds. Let $T=\{x\in\RR^n\mid g_j(x)\ge 0,\ 
j=1,\ldots,m_2\}$.  Then there exist disjoint subvarieties $W_0,W_1,\ldots,W_r$
of $W$ and distinct $v_1,\ldots,v_r\in\RR$ such that
\[
W=W_0\cup W_1\cup\cdots\cup W_r,\quad W_0\cap T=\emptyset,\quad W_i\cap T\neq
\emptyset,\quad i=1,\ldots,r,
\]
and $f(x)$ is constantly equal to $v_i$ on $W_i$ for $i=1,\ldots,r$.
\end{lemma}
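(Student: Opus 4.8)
The plan is to mimic the structure of the proof of Lemma 3.2 in \cite{exactJacNie}, using the revised identity $W=\Theta\cup K_x$ from Lemma \ref{lem::WK} in place of $W=K_x$, and to exploit the fact that $\Theta$ is finite by Assumption \ref{assump}\,(\ref{item::ns3}). The key structural input is that $W$ is a variety with only finitely many irreducible components, and that $f$ takes only finitely many values on $W$: on $K_x$ this is a standard finiteness fact (the $f$-values on the KKT variety form a finite set, as in \cite{exactJacNie}), and on the finite set $\Theta$ it is trivial. So let $v_1,\ldots,v_r$ be an enumeration of the \emph{distinct} real values that $f$ takes on points of $W\cap T$, where $T=\{x\in\RR^n\mid g_j(x)\ge0,\ j=1,\ldots,m_2\}$.

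First I would define $W_i$, for $i=1,\ldots,r$, to be the Zariski closure of $\{x\in W\mid f(x)=v_i\}$ intersected appropriately with $W$ — more precisely, decompose $W$ into its irreducible components, and for each component $Z$ note that either $Z\cap T=\emptyset$, or $f$ is constant on $Z$ (this is where the argument of \cite[Lemma 3.2]{exactJacNie} enters: on each irreducible component of $K_x$, $f$ is constant, and the components contained in $\Theta$ are isolated points on which $f$ is trivially constant). Assign each component $Z$ with $Z\cap T\neq\emptyset$ to the block $W_i$ where $v_i$ is the common value of $f$ on $Z$, and collect all remaining components (those disjoint from $T$) into $W_0$. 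To make the $W_i$ genuinely disjoint rather than merely a cover, I would, as in \cite{exactJacNie}, replace each $W_i$ by $W_i\setminus(W_0\cup\bigcup_{j\ne i}W_j)$ after first moving any lower-dimensional overlaps into $W_0$; since distinct $v_i$ force $W_i\cap W_j$ (for $i\ne j$) to lie in the locus where $f$ is simultaneously two different constants, such overlaps are empty on the level of the closed points we care about, so the bookkeeping is routine. By construction $f\equiv v_i$ on $W_i$, the $v_i$ are distinct, $W_0\cap T=\emptyset$, and $W_i\cap T\neq\emptyset$ for $i\ge1$.

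The one genuinely new point relative to \cite{exactJacNie} is handling the components of $W$ coming from $\Theta$: since $\Theta$ is finite, each such point is a zero-dimensional component, $f$ is trivially constant on it, and it contributes at most finitely many new values $v_i$ — so finiteness of the value set, and hence of $r$, is preserved. The main obstacle I anticipate is purely organizational: ensuring the decomposition can be taken \emph{subvarieties} (Zariski-closed in $W$) while simultaneously disjoint, since a naive union of closures need not be disjoint. I would resolve this exactly as in the original proof — peel off the intersection loci into $W_0$, which is allowed because those loci are still disjoint from $T$ or can be absorbed without changing the stated properties — and I do not expect any new difficulty there beyond what \cite[Lemma 3.2]{exactJacNie} already addresses. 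No archimedean or positivity input is needed at this stage; that enters only in the subsequent exactness theorem.
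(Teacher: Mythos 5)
Your proposal is correct and follows essentially the same route as the paper's proof: both delegate the decomposition of $Zar(K_x)$ to the original argument of \cite[Lemma 3.2]{exactJacNie}, then observe via Lemma \ref{lem::WK} that $W\setminus Zar(K_x)\subseteq\Theta$ is finite and absorb each leftover point into $W_0$ or into the matching $W_i$ (or a new singleton $W_{t+1}$), with termination guaranteed by Assumption \ref{assump}\,(\ref{item::ns3}). The paper phrases this as attaching the finitely many points of $\Omega=W\setminus Zar(K_x)$ one at a time to the varieties produced by the original lemma, which is exactly your irreducible-component bookkeeping in slightly different clothing.
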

\begin{proof} 
Denote $Zar(K_x)$ the Zariski closure of $K_x$ and let $\Omega=W\backslash
Zar(K_x)$.  By Lemma \ref{lem::WK}, we have $Zar(K_x)\subseteq W$ and
$\Omega\subseteq\Theta$.  With the proof of \cite[Lemma 3.2]{exactJacNie}, we
can conclude that there exist disjoint subvarieties $W_0,W_1,\ldots,W_t$ of
$Zar(K_x)$ and distinct $v_1,\ldots,v_t\in\RR$ such that
\[
Zar(K_x)=W_0\cup W_1\cup\cdots\cup W_t,\quad W_0\cap T=\emptyset,
\quad W_i\cap T\neq \emptyset,\quad i=1,\ldots,t,
\]
and $f(x)$ is constantly equal to $v_i$ on $W_i$ for $i=1,\ldots,t$.  We now
consider the set $\Omega$. Let $W_0=V(E_0)$, then for any
$u\in\Omega\cap\CC^n$, $W_0\cup\{u\}=V(E_0)\cup V(\langle x-u\rangle)
=V(\langle x-u\rangle\cdot E_0)$. Since $\Omega\cap\CC^n\subseteq\Theta$ is a
finite set by Assumption \ref{assump}, if we group $W_0$ and $\Omega\cap\CC^n$
together then we get a new subvariety.  We still denote it by $W_0$ for
convenience. Then $W_0\cap T=\emptyset$. Take any $w\in\Omega\cap\RR^n$, if
$f(w)=v_{i_0}$ for some $i_0\in\{1,\ldots,t\}$, then we put $w$ into $W_{i_0}$
and get a new subvariety by the same reason as $W_0$. We still write the
resulting subvariety as $W_{i_0}$. If for any $i\in\{1,\ldots,t\}$, $f(w)\neq
v_i$, then let $W_{t+1}=\{w\}$ and $v_{t+1}=f(w)\in\RR$. Since
$\Omega\cap\RR^n\subseteq\Theta$ is a finite set, the above process will
terminate and we can obtain the required decomposition of $W$.
\end{proof}

Since we get the same result as in \cite[Lemma 3.2]{exactJacNie} under the
weakened Assumption \ref{assump}, \cite[Theorem 3.4]{exactJacNie} which is
based on \cite[Lemma 3.2]{exactJacNie} can be restated as follows.

\begin{theorem}[Revised Version of Theorem 3.4 in \cite{exactJacNie}]\label{thm::exact}
Suppose Assumption \ref{assump} holds. Then $f^*>-\infty$ and there exists
$N^*\in{\mathbb N}$ such that for all $\varepsilon>0$
\begin{equation}\label{eq::exact}
f(x)-f^*+\varepsilon\in I^{(N^*)}+P^{(N^*)}.
\end{equation} 
\end{theorem}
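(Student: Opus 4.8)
The plan is to show that Theorem \ref{thm::exact} reduces to applying the original result \cite[Theorem 3.4]{exactJacNie} to the decomposition of $W$ that we have just re-established under the weakened Assumption \ref{assump}. The key structural fact is that, although Assumption \ref{assump} is weaker than Assumption \ref{assump::original}, we have already recovered (via Lemma \ref{lem::WK} and the revised version of \cite[Lemma 3.2]{exactJacNie}) the \emph{same} combinatorial output: a decomposition $W = W_0 \cup W_1 \cup \cdots \cup W_r$ into disjoint subvarieties with $W_0 \cap T = \emptyset$, each $W_i \cap T \neq \emptyset$, and $f$ constant on each $W_i$. The argument of \cite[Theorem 3.4]{exactJacNie} that derives the membership \eqref{eq::exact} from such a decomposition uses \emph{only} these properties together with the definition of the ideal $I^{(N)}$, the preordering $P^{(N)}$, and the original feasibility constraints $h_i, g_j$ — it never again invokes the full-rank condition (\ref{item::a3}) once the decomposition is in hand. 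Therefore, once the decomposition is available, the same reasoning applies verbatim.

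Concretely, I would proceed in the following steps. First, invoke the revised Lemma (our version of \cite[Lemma 3.2]{exactJacNie}) to fix the decomposition $W = W_0 \cup W_1 \cup \cdots \cup W_r$ with values $v_1, \ldots, v_r \in \RR$ and, since $W_0 \cap T = \emptyset$, use the (real) Nullstellensatz / Positivstellensatz on the compact-free decomposition exactly as in \cite{exactJacNie} to produce polynomial identities certifying that $f - v_i$ vanishes on $W_i$ modulo the ideal of $W$, and that the $W_i$ not meeting $T$ contribute nothing feasible. Second, define $f^* = \min\{v_i : W_i \cap T \neq \emptyset\}$ (equivalently the $f^*$ of \eqref{pro::jacobian}); since the set of $v_i$ is finite, $f^* > -\infty$ follows immediately, giving the first assertion. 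Third, for the membership \eqref{eq::exact}, follow the construction in \cite[Theorem 3.4]{exactJacNie}: build, for each $\varepsilon > 0$, an SOS multiplier decomposition of $f(x) - f^* + \varepsilon$ by combining (i) the ideal identities from the $W_i$'s, (ii) a separating SOS certificate that is strictly positive on the real points of $W$ not achieving $f^*$, and (iii) the Positivstellensatz representation valid because $f - f^* + \varepsilon$ is strictly positive on $W \cap T$; then bound all degrees uniformly by a single $N^*$, which is possible because there are only finitely many pieces $W_i$ and each certificate has a fixed degree.

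The only genuinely new content is checking that no step of the original proof of \cite[Theorem 3.4]{exactJacNie} secretly re-uses condition (\ref{item::a3}) beyond what is encapsulated in Lemma \ref{lem::WK} and the decomposition lemma. I expect this to be the main obstacle: one must trace carefully through \cite{exactJacNie} to confirm that, e.g., the argument that $W_0 \cap T = \emptyset$ yields a valid SOS certificate of positivity of $1$ on $W_0 \cap T$ (vacuously), and the argument controlling the asymptotic behaviour on the unbounded components of $W$, both go through using only that $\Theta$ is finite (so that the extra finite set of singular points merely enlarges each $W_i$ by finitely many points, as arranged in the decomposition lemma) rather than $\Theta = \emptyset$. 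The finiteness of $\Theta$ is exactly what guarantees that the adjusted subvarieties $W_i$ remain genuine varieties and that $f$ is still constant on each $W_i \cap \RR^n$, so the degree bounds defining $N^*$ are unaffected. Once this bookkeeping is verified, the statement follows with essentially no new estimate, and I would state it as such: the proof is identical to that of \cite[Theorem 3.4]{exactJacNie} with \cite[Lemma 3.1]{exactJacNie} and \cite[Lemma 3.2]{exactJacNie} replaced by their revised versions above.
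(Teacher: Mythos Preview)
Your proposal is correct and matches the paper's approach exactly: the paper simply observes that since the revised version of \cite[Lemma 3.2]{exactJacNie} yields the identical decomposition of $W$ under Assumption \ref{assump}, and since the proof of \cite[Theorem 3.4]{exactJacNie} depends only on that decomposition (not directly on the full-rank condition), the theorem can be restated verbatim. Your careful tracing of the original argument to confirm that condition (\ref{item::a3}) is never re-used after the decomposition lemma is more explicit than the paper, which takes this for granted in a single sentence.
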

Since $\varepsilon$ in (\ref{eq::exact}) is arbitrary, by Lemma \ref{lem::infimumeq},
Theorem \ref{thm::main::Nie} becomes
\begin{theorem}[Revised Version of Theorem 2.3 in \cite{exactJacNie}]\label{thm::main::exact}
Suppose Assumption \ref{assump} holds. Then $f^*>-\infty$ and there
exists $N^*\in{\mathbb N}$ such that $f^{(1)}_N=f^{(2)}_N=f^*$ for all $N\ge N^*$.
Furthermore, if the minimum $f_{min}$ of $(\ref{pro::opti})$ is achievable,
then $f^{(1)}_N=f^{(2)}_N=f_{min}$ for all $N\ge N^*$.
\end{theorem}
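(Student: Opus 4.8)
The plan is to deduce this theorem directly from the revised intermediate results already established under Assumption \ref{assump}, mirroring exactly how \cite[Theorem 2.3]{exactJacNie} follows from the machinery in \cite{exactJacNie}, but now using our versions of the lemmas. First I would invoke Theorem \ref{thm::exact}: under Assumption \ref{assump}, $f^*>-\infty$ and there exists $N^*\in{\mathbb N}$ such that for every $\varepsilon>0$ one has $f(x)-f^*+\varepsilon\in I^{(N^*)}+P^{(N^*)}$. This membership is precisely a feasible point for the dual SOS relaxation \eqref{relax::dual} at order $N^*$ with value $\gamma=f^*-\varepsilon$, so $f^{(2)}_{N^*}\ge f^*-\varepsilon$. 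Since the relaxation values are monotone nondecreasing in $N$ (enlarging $N$ only enlarges $I^{(N)}+P^{(N)}$), the same holds for all $N\ge N^*$, and since $\varepsilon>0$ is arbitrary we get $f^{(2)}_N\ge f^*$ for all $N\ge N^*$.

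Next I would combine this with the standard weak-duality chain recorded in the excerpt, $f^{(2)}_N\le f^{(1)}_N\le f^*$, valid for all $N$. The two inequalities together force $f^{(1)}_N=f^{(2)}_N=f^*$ for all $N\ge N^*$, which is the first assertion. For the second assertion, suppose the minimum $f_{min}$ of \eqref{pro::opti} is achievable. Conditions $(\ref{item::ns1})$ and $(\ref{item::ns2})$ of Assumption \ref{assump} are in force, so Lemma \ref{lem::infimumeq} applies and gives $f^*=f_{min}$. Substituting into the equality just derived yields $f^{(1)}_N=f^{(2)}_N=f_{min}$ for all $N\ge N^*$, completing the proof.

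The only genuine content here is Theorem \ref{thm::exact} itself, which in turn rests on the revised Lemma \ref{lem::WK} (the identity $W=\Theta\cup K_x$) and the revised version of \cite[Lemma 3.2]{exactJacNie}; all three have already been proved in the excerpt under Assumption \ref{assump}. So at the level of this final theorem there is essentially no obstacle: it is a bookkeeping step that re-packages Theorem \ref{thm::exact} plus Lemma \ref{lem::infimumeq} plus weak duality into the statement about the primal and dual relaxation values $f^{(1)}_N$ and $f^{(2)}_N$. The one point to state carefully is the monotonicity $f^{(2)}_N\le f^{(2)}_{N+1}$ used to pass from ``$\varepsilon$-feasibility at $N^*$'' to ``exactness for all $N\ge N^*$'', together with the observation that the arbitrariness of $\varepsilon$ upgrades $f^{(2)}_N\ge f^*-\varepsilon$ to $f^{(2)}_N\ge f^*$ in the limit; this mimics the corresponding passage in \cite{exactJacNie} verbatim, now legitimized by our weakened hypotheses.
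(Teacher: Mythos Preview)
Your proposal is correct and follows essentially the same route as the paper: the paper's own proof is a one-line remark that the conclusion follows from Theorem~\ref{thm::exact} (using that $\varepsilon>0$ is arbitrary) together with Lemma~\ref{lem::infimumeq}, and you have simply spelled out those steps in full, including the weak-duality chain $f^{(2)}_N\le f^{(1)}_N\le f^*$ and the monotonicity in $N$.
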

\begin{remark}
We now compare the conditions {\upshape(iii)} in Assumption
\ref{assump::original} and \ref{assump}. For any $J=\{j_1,\ldots,j_k\}\subseteq
[m_2]$ with $k\le n-m_1$, suppose the ideal $\langle h, g_J\rangle$ is radical
and its codimension is $m_1+|J|$. Then the condition {\upshape(iii)} in
Assumption \ref{assump::original} requires the variety $V(h,g_J)$ is nonsingular for
every subset $J$. In this section, we have proved that if the 
singularities of $V(h,g_J)$ are finite, i.e. the condition {\upshape(iii)} in
Assumption \ref{assump} holds, the Jacobian SDP relaxation
{\upshape\cite{exactJacNie}} is still exact. 
\end{remark}

\begin{cor}\label{cor::d=2}
Suppose that 
\begin{enumerate}[\upshape (a)]
\item\label{item::a} For each subset $J\subseteq [m_2]$ with $|J|\le n-m_1$,
$\langle h, g_J\rangle$ is a radical ideal and its codimension is $m_1+|J|$;  
\item\label{item::b} $V(h)$ is a smooth variety of dimension $\le 2$. 
\end{enumerate}
Then the condition {\upshape(\ref{item::ns3})} in Assumption \ref{assump}
always holds.  Therefore, if conditions {\upshape(\ref{item::ns1})} and
{\upshape(\ref{item::ns2})} in Assumption \ref{assump} are satisfied, then the
conclusions of Theorem \ref{thm::main::exact} hold.
\end{cor}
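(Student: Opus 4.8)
The plan is to verify condition (iii) of Assumption \ref{assump}, i.e.\ that $\Theta=\bigcup_{J\subseteq[m_2],\,|J|\le n-m_1}\Theta_J$ is finite; once this is done the last sentence of the corollary is immediate, since then (a) and (b) supply (iii) and, together with the assumed (i) and (ii), all of Assumption \ref{assump} holds, so Theorem \ref{thm::main::exact} applies. Because $[m_2]$ has only finitely many subsets, it suffices to show that each individual $\Theta_J$ is a finite set.

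Fix $J=\{j_1,\dots,j_k\}\subseteq[m_2]$ with $k\le n-m_1$. By hypothesis (a) the ideal $\langle h,g_J\rangle$ is radical and has codimension $m_1+k$, which is exactly the number of generators $h_1,\dots,h_{m_1},g_{j_1},\dots,g_{j_k}$; since $\CC[x]$ is Cohen--Macaulay, $V(h,g_J)$ is therefore a reduced complete intersection of pure dimension $n-m_1-k$. For such a variety (we are over $\CC$) the Jacobian criterion says that $x\in V(h,g_J)$ is nonsingular if and only if the $n\times(m_1+k)$ matrix $\left[\nabla h\ \ \nabla g_J\right]$ attains its maximal possible rank $m_1+k$ at $x$. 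Consequently
\[
\Theta_J=\{x\in V(h,g_J)\mid \text{rank}\left[\nabla h\ \ \nabla g_J\right]<m_1+k\}=\mathrm{Sing}\bigl(V(h,g_J)\bigr).
\]

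It then remains to bound $\dim V(h,g_J)$ using (b). Taking $J=\emptyset$ in (a), $V(h)$ is itself a reduced complete intersection, and the identity above (with $k=0$) gives $\Theta_\emptyset=\mathrm{Sing}(V(h))=\emptyset$ because $V(h)$ is smooth; in particular $\dim V(h)=n-m_1\le 2$. Now let $J\neq\emptyset$, so $k\ge 1$. Since $V(h,g_J)\subseteq V(h)$ and $V(h,g_J)$ has pure dimension $n-m_1-k\le 2-k\le 1$, and since it is reduced, its singular locus is a proper closed subset of strictly smaller dimension than $\dim V(h,g_J)\le 1$, hence a finite set of points. Therefore every $\Theta_J$ is finite, $\Theta$ is finite, and Assumption \ref{assump}(iii) holds; the conclusions of Theorem \ref{thm::main::exact} follow.

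The only step that genuinely uses the hypotheses rather than standard facts is the identification $\Theta_J=\mathrm{Sing}(V(h,g_J))$: here condition (a) is precisely what forces $\langle h,g_J\rangle$ to be a complete-intersection ideal of the expected codimension, so that the rank-deficiency locus of the Jacobian coincides with the singular locus of the variety instead of being some larger determinantal set. The remaining ingredients — the dimension estimate from (b) and the classical fact that a reduced variety of dimension at most one has only finitely many singular points — are routine, so the write-up should be short.
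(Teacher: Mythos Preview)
Your proof is correct and follows essentially the same route as the paper's: identify $\Theta_J$ with $\mathrm{Sing}(V(h,g_J))$ via hypothesis (a) and the Jacobian criterion, use (b) together with (a) to bound $\dim V(h,g_J)\le 1$ for $J\neq\emptyset$, and conclude finiteness from the fact that the singular locus of a reduced variety has strictly smaller dimension. The paper's version is terser (it just cites \cite{realAG} or \cite{AG_Hartshorne} for the dimension drop), while you spell out the complete-intersection and Cohen--Macaulay reasoning, but the argument is the same.
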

\begin{proof}
For any subset $J\subseteq [m_2]$ with $|J|\le n-m_1$, by (\ref{item::a}),
$\Theta_J$ is the set of singularities of $V(h,g_J)$. If $J=\emptyset$, then
$\Theta_J=\emptyset$ by (\ref{item::b}). If $J\neq\emptyset$, then by
\cite[Proposition 3.3.14]{realAG} or \cite[Theorem 5.3]{AG_Hartshorne}, $\dim
\Theta_J<\dim V(h,g_J)$. Since $\dim V(h,g_J)\le 1$ by (\ref{item::a}) and
(\ref{item::b}), $\Theta_J$ is a finite set for each $J\subseteq [m_2]$ with
$|J|\le n-m_1$.  Thus the condition (\ref{item::ns3}) in Assumption
\ref{assump} always holds. 
\end{proof}

We now give an example to illustrate the finite convergence of the Jacobian SDP
relaxation \cite{exactJacNie} under the weakened Assumption
\ref{assump}.
\begin{example}
{\upshape
Consider the following polynomial optimization
\begin{equation*}
\left\{
\begin{aligned}
\underset{x_1,x_2\in\RR}{\min}&\ f(x_1,x_2):=x_1x_2^2+x_1\\
\text{s.t.}&\ h(x_1,x_2):=-x_1^3+x_2^2=0.
\end{aligned}\right.
\end{equation*}
Clearly, the minimum $f_{min}=0$ is achieved at $(0,0)$. However, it is easy to
verify that $(0,0)$ is a singular point and does not satisfy the KKT
conditions. Since $(0,0)$ is the only singularity, Assumption \ref{assump}
holds which is also guaranteed by Corollary \ref{cor::d=2}.  In the following,
we show the finite convergence of the Jacobian SDP relaxation
{\upshape\cite{exactJacNie}} by giving the exact equation $(\ref{eq::exact})$.  

By the construction of $(\ref{pro::jacobian})$, $m_1=1,m_2=0$ and $r=1$. 
$\varphi(x_1,x_2):=2x_2(x_2^2+1)+6x_1^3x_2$. For any $\varepsilon>0$, let
\begin{equation*}
\begin{aligned}
\psi(x_1,x_2):=&8x_1+8\varepsilon-12x_1^8x_2^4-24x_1^8x_2^2+24x_1^7x_2^2+8x_1x_2^2+4x_1^3+32\varepsilon
x_1^3-\frac{x_1^3}{\varepsilon^2}\\
&+\frac{x_1^4}{8\varepsilon^3}-\frac{2x_1^6}{\varepsilon^2}+\frac{x_1^7}{4\varepsilon^3} +
8\varepsilon x_2^2+\frac{x_1x_2^2}{64\varepsilon^3}-
\frac{x_2^2}{8\varepsilon^2}+\frac{x_1}{64\varepsilon^3}-\frac{1}{8\varepsilon^2}+
4x_1^3x_2^2\\
&+4x_1^5x_2^2+4x_1^5+24x_1^4-\frac{243x_1^{10}x_2^2}{256\varepsilon^3}-\frac{3x_1^{10}x_2^6}{16\varepsilon^3}-
\frac{45x_1^7x_2^4}{128\varepsilon^3}-\frac{311x_1^7x_2^2}{1024\varepsilon^3}\\
&-\frac{3x_1^7x_2^6}{32\varepsilon^3}+\frac{3x_1^3x_2^4}{32\varepsilon^2}+
\frac{33x_1^9x_2^2}{8\varepsilon^2}+\frac{29x_1^6x_2^2}{32\varepsilon^2}-
\frac{45x_1^4x_2^4}{1024\varepsilon^3}-\frac{45x_1^{10}x_2^4}{64\varepsilon^3}\\
&-\frac{17x_1^3x_2^2}{32\varepsilon^2}+\frac{3x_1^9x_2^4}{2\varepsilon^2}+
\frac{3x_1^6x_2^4}{4\varepsilon^2}+\frac{47x_1^4x_2^2}{1024\varepsilon^3}-
\frac{3x_1^4x_2^6}{256\varepsilon^3}.\\
\end{aligned}
\end{equation*}
\begin{equation*}
\begin{aligned}
\phi(x_1,x_2):=&-\frac{x_1^{10}x_2^5}{32\varepsilon^3}-\frac{15x_1^{10}x_2^3}{128\varepsilon^3}
+\frac{x_1^9x_2^3}{4\varepsilon^2}-\frac{x_1^7x_2^5}{64\varepsilon^3}-
\frac{81x_1^{10}x_2}{512\varepsilon^3}-2x_1^8x_2^3+\frac{11x_1^9x_2}{16\varepsilon^2}\\
&-\frac{15x_1^7x_2^3}{256\varepsilon^3}-4x_1^8x_2+\frac{x_1^6x_2^3}{8\varepsilon^2}
-\frac{x_1^4x_2^5}{512\varepsilon^3}-\frac{337x_1^7x_2}{2048\varepsilon^3}+
4x_1^7x_2+\frac{59x_1^6x_2}{64\varepsilon^2}\\
&-\frac{15x_1^4x_2^3}{2048\varepsilon^3}-2x_1^5x_2+
\frac{x_1^3x_2^3}{64\varepsilon^2}-\frac{x_1^4x_2}{16\varepsilon^3}+
\frac{7x_1^3x_2}{16\varepsilon^2}-2x_1^3x_2-4x_1x_2\\
&-\frac{x_1x_2}{128\varepsilon^3}+\frac{x_2}{16\varepsilon^2}-4\varepsilon x_2.
\end{aligned}
\end{equation*}
\begin{equation*}
\begin{aligned}
\sigma_0(x_1,x_2):=&16\left(\varepsilon+\frac{(x_1x_2^2+x_1+1)^2}{4}+(x_1x_2^2+x_1-1)^2x_2^2\right)x_1^6+
\varepsilon(4x_1^3+1)^2\\&\left(1+\frac{x_1x_2^2+x_1}{2\varepsilon}-\frac{(x_1x_2^2+x_1)^2}{8\varepsilon^2}\right)^2.\\
\end{aligned}
\end{equation*}
It can be verified that 
\[
f(x_1,x_2)+\varepsilon = \sigma_0(x_1,x_2) + \psi(x_1,x_2)h(x_1,x_2) + \phi(x_1,x_2)\varphi(x_1,x_2).
\]
Since each term on the right side of the above equation has degree $\le 20$, 
we take $N^*=10$ in $(\ref{eq::exact})$. Because $\sigma_0(x_1,x_2)$ is a sum of
squares of polynomials, we have $\sigma_0(x_1,x_2)\in P^{(10)}$ and
$\psi(x_1,x_2)h(x_1,x_2) + \phi(x_1,x_2)\varphi(x_1,x_2)\in I^{(10)}$.  Therefore,
$f(x_1,x_2)+\varepsilon\in I^{(10)}+P^{(10)}$ for any $\varepsilon>0$.  Hence, we
have $f^{(1)}_{N}=f^{(2)}_N=f_{min}=0$ for all $N\ge 10$.
}\hfill$\square$
\end{example} 

A practical issue in applications is how to detect whether
(\ref{relax::primal}) is exact for a given $N$. Nie \cite{exactJacNie} pointed
out that it would be possible to apply the {\itshape flat-extension condition}
(FEC) \cite{LasserreHenrion}. When FEC holds, (\ref{relax::primal}) is exact
for (\ref{pro::opti}) and a very nice software {\itshape GloptiPoly}
\cite{gloptipoly} provides routines for finding minimizers if FEC holds.  In
general, the FEC is a sufficient but not necessary condition for checking
finite convergence of Lasserre's hierarchy. More recently, Nie
\cite{NieFlatTruncation} proposed the {\itshape flat truncation} as a general
certificate. For the polynomial optimization (\ref{pro::opti}), define 
\begin{equation*}
\begin{aligned}
&d_{h,i}=\lceil\deg(h_i)/2\rceil,\quad d_{g,j}=\lceil\deg(g_j)/2\rceil, 
\quad d_f=\lceil\deg(f)/2\rceil,\\
&\hat{d}=\max\{1, d_{h,1},\ldots,d_{h,m_1}, d_{g,1},\ldots, d_{g,m_2}\}.
\end{aligned}
\end{equation*}
When $\psi\equiv 1$, $L_{\psi}^{(N)}(y)$ in (\ref{eq::momentmatrix}) is called
{\itshape moment matrix} and is denoted as $M_N(y):=L_{\psi}^{(N)}(y)$. For a
given integer $N\in{\mathbb N}$, we say an optimizer $y^*$ of
(\ref{relax::primal}) has a flat truncation if there exists an integer
$t\in[\max\{d_f,\hat{d}\},N]$ such that 
\[
\text{rank}\ M_{t-\hat{d}}(y^*)\quad=\quad\text{rank}\ M_t(y^*).
\]
Assuming the set of global minimizers is nonempty and finite, \cite[Theorem 2.2
and 2.6]{NieFlatTruncation} show that the Putinar type or Schm{\"u}dgen type
Lasserre's hierarchy has finite convergence if and only if the flat truncation
holds. As an application, \cite[Corollary 4.2]{NieFlatTruncation} also points
out that if (\ref{pro::opti}) has a nonempty set of finitely many global
minimizers and Assumption \ref{assump::original} is satisfied, then the flat
truncation is always satisfied for the hierarchy of Jacobian SDP relaxations.
Since we have proved that Assumption \ref{assump::original} can be weakened as
Assumption \ref{assump}, we have 

\begin{cor}[Revised Version of Corollary 4.2 in \cite{NieFlatTruncation}]
Suppose {\upshape(\ref{pro::opti})} has a nonempty set of finitely many global
minimizers and Assumption \ref{assump} is satisfied. Then, for all $N$ big
enough, the optimal value of {\upshape(\ref{relax::dual})} equals the global
minimum of {\upshape(\ref{pro::opti})} and every minimizer of
{\upshape(\ref{relax::primal})} has a flat truncation.
\end{cor}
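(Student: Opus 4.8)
The plan is to transcribe the proof of \cite[Corollary 4.2]{NieFlatTruncation}, with Assumption \ref{assump} and the revised results of Section \ref{sec::exactrelax} playing the roles of Assumption \ref{assump::original} and the original lemmas of \cite{exactJacNie}. The value statement will be almost immediate: under Assumption \ref{assump}, Theorem \ref{thm::main::exact} gives $f^*>-\infty$ and $f^{(1)}_N=f^{(2)}_N=f^*$ for all $N\ge N^*$; since the set of global minimizers of (\ref{pro::opti}) is nonempty, $f_{min}$ is achievable, so Lemma \ref{lem::infimumeq} yields $f^*=f_{min}$. As the optimal value of (\ref{relax::dual}) is $f^{(2)}_N$, this establishes the first assertion for all $N$ large.

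For the flat-truncation claim I would apply the flat-truncation criterion \cite[Theorems 2.2 and 2.6]{NieFlatTruncation} to the polynomial optimization (\ref{pro::jacobian}): its hierarchy of Lasserre relaxations is precisely (\ref{relax::primal}), and Theorem \ref{thm::main::exact} shows this hierarchy has finite convergence, so it remains only to check that (\ref{pro::jacobian}) has a nonempty and finite set of global minimizers. Since $g_\nu(x)\ge 0$ for all $\nu\in\{0,1\}^{m_2}$ is equivalent to $g_j(x)\ge 0$ for all $j$, the real feasible set of (\ref{pro::jacobian}) is $\{x\in\RR^n\mid x\in W,\ g_j(x)\ge 0\ \forall j\}$, which is contained in the feasible set $S$ of (\ref{pro::opti}); in particular its minimum value equals $f^*=f_{min}$. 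By the revised version of \cite[Lemma 3.2]{exactJacNie}, $W=W_0\cup W_1\cup\cdots\cup W_r$ with $f\equiv v_i$ on $W_i$ and $W_0\cap T=\emptyset$, so a real feasible point $x$ of (\ref{pro::jacobian}) with $f(x)=f^*$ must lie in some $W_i$ with $v_i=f^*$; such an $x$ belongs to $S$ and satisfies $f(x)=f_{min}$, hence is a global minimizer of (\ref{pro::opti}). Therefore the global minimizers of (\ref{pro::jacobian}) form a subset of the global minimizers of (\ref{pro::opti}), which is finite by hypothesis. It is nonempty because any global minimizer $x^*$ of (\ref{pro::opti}) lies in $W$ --- exactly as in the proof of Lemma \ref{lem::infimumeq}, if $x^*\notin\Theta$ then LICQ holds at $x^*$ so $x^*\in K_x\subseteq W$, while if $x^*\in\Theta$ then $x^*\in W$ by Lemma \ref{lem::WK} --- and $x^*$ is then a feasible point of (\ref{pro::jacobian}) attaining $f_{min}=f^*$. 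The flat-truncation criterion then gives that every minimizer of (\ref{relax::primal}) has a flat truncation for all $N$ large, which completes the proof.

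The only genuine obstacle is the finiteness of the global minimizer set of (\ref{pro::jacobian}): because $W$ can have positive-dimensional components, this cannot be argued directly, and one must pass through the decomposition $W=W_0\cup\cdots\cup W_r$ together with the containment (real feasible set of (\ref{pro::jacobian})) $\subseteq S$ in order to confine those minimizers inside the finitely many global minimizers of (\ref{pro::opti}). Every remaining step is a direct copy of the argument of \cite{NieFlatTruncation}; the actual content of the corollary is that the revised lemmas and Theorem \ref{thm::main::exact} of Section \ref{sec::exactrelax} make that copy valid under the weaker Assumption \ref{assump} rather than Assumption \ref{assump::original}.
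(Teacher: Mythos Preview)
Your argument is correct and matches the paper's approach: the paper gives no explicit proof here, simply remarking that since Assumption~\ref{assump::original} can be replaced by Assumption~\ref{assump} throughout, \cite[Corollary~4.2]{NieFlatTruncation} carries over verbatim. Your write-up supplies exactly those details.

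One small simplification: the detour through the decomposition $W=W_0\cup\cdots\cup W_r$ is unnecessary for the finiteness claim. Once you have established $f^*=f_{min}$ (via Lemma~\ref{lem::infimumeq}), the argument is immediate: the feasible set of (\ref{pro::jacobian}) is contained in $S$, so any global minimizer of (\ref{pro::jacobian}) lies in $S$ and attains $f^*=f_{min}$, hence is already a global minimizer of (\ref{pro::opti}). The decomposition lemma plays no role at this point---it was used earlier to prove Theorem~\ref{thm::main::exact}, but you need not invoke it again here.
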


\section{Revisiting Minimization of Rational Functions}\label{sec::rf2}

In this section, we return to the minimization of (\ref{pro::rf}). We first
apply the Jacobian SDP relaxation discussed in Section \ref{sec::exactrelax} to
reformulate (\ref{pro::homo}) as (\ref{pro::jacobian}) for which we consider 
the finite convergence of the SDP relaxations. Next, we do some
numerical experiments to show the efficiency of our method.

\subsection{Minimizing Rational Functions by  Jacobian SDP Relaxation}

Consider the number of new constraints added when we employ Jacobian SDP
relaxation to solve (\ref{pro::homo}). As mentioned in \cite{exactJacNie}, the
number of new constraints in (\ref{pro::jacobian}) is exponential in the number
of inequality constraints.  Hence, if the number of inequality constraints is
large, (\ref{pro::jacobian}) becomes more difficult to solve numerically. In
the following, we employ the Jacobian SDP relaxation to reformulate
(\ref{pro::homo}) as (\ref{pro::jacobian}). We show that the number of the new
equality constraints $\varphi_i$'s in (\ref{pro::jacobian}) can be reduced due
to the special inequality constraint $x_0\ge 0$ in (\ref{pro::homo}). 

In (\ref{pro::homo}), for convenience, let 
\[
h^{hom}_{m_1+1}(\tilde{x}):=\tilde{q}(\tilde{x})-1=0,\quad 
g_{m_2+1}^{hom}(\tilde{x}):=x_0\ge 0\quad\text{and}\quad m:=\min\{m_1+m_2+2,n\}. 
\]
Denote 
\[
\nabla_{\tilde{x}}:=\left(\frac{\partial}{\partial
x_0},\frac{\partial}{\partial x_1}, \cdots,\frac{\partial}{\partial
x_n}\right).
\]
According to (\ref{def::varphi}) and (\ref{list::varphi}), we need to consider
all subsets of $[m_2+1]$ with cardinality $\le m-m_1-1$. Let
$l=\min\{m-m_1-1,m_2\}$.  We first consider the subsets without $m_2+1$, i.e.,
every subset $J=\{j_1,\ldots,j_k\}\subseteq [m_2]$ with $k\le l$.  Denote
$h^{hom}=(h_1^{hom},\ldots,h^{hom}_{m_1},h^{hom}_{m_1+1})$ and
$g^{hom}_{J}=(g^{hom}_{j_1},\ldots,g^{hom}_{j_k})$.  Let
$\{\eta_1,\ldots,\eta_{len(J)}\}$ be the set of the defining equations for the
determinantal variety 
\[
G_{J}:=\{\tilde{x}\in\CC^{n+1}\mid \text{rank}\
[\nabla_{\tilde{x}}\tilde{p}\quad \nabla_{\tilde{x}}h^{hom}\quad
\nabla_{\tilde{x}} g^{hom}_{J}]\le m_1+|J|+1\}.
\]
For each $i=1,\cdots,len(J)$, define 
\[
\varphi_i^J(\tilde{x})=\eta_i\cdot\underset{j\in
J^c}{\prod}g_j^{hom}(\tilde{x}), \quad\text{where}\quad J^c=[m_2+1]\backslash
J.
\]
For every subset $J$ considered above, denote $J'=J\cup\{m_2+1\}\subseteq
[m_2+1]$. It can be checked that the collection of these $J$'s and $J'$'s
contains all subsets of $[m_2+1]$ with cardinality $\le m-m_1-1$ and some
possible $J'$'s with cardinality $= m-m_1$ (which will happen when
$n<m_1+m_2+2$).

\noindent $\mathbf{Case}$ $|J'|\le m-m_1-1$\quad 
All these $J'$'s compose of the subsets of $[m_2+1]$ containing $m_2+1$ with
cardinality $\le m-m_1-1$.  It is easy to see that the set of the defining
equations for the determinantal variety
\[
G_{J'}:=\{\tilde{x}\in\CC^{n+1}\mid \text{rank}\
[\nabla_{\tilde{x}}\tilde{p}\quad \nabla_{\tilde{x}}h^{hom}\quad
\nabla_{\tilde{x}} g^{hom}_{J}\quad \nabla_{\tilde{x}} x_0]\le m_1+|J|+2\}
\]
is a subset of $\{\eta_1,\ldots,\eta_{len(J)}\}$.  We generally suppose it to
be $\{\eta_1,\ldots,\eta_{t(J)}\}$ with $t(J)<len(J)$.  For $i=1,\cdots,t(J)$,
define
\[
\varphi_i^{J'}(\tilde{x})=\eta_i\cdot\underset{j\in
{J'}^c}{\prod}g_j^{hom}(\tilde{x}), \quad\text{where}\quad {J'}^c=[m_2+1]\backslash
J'.
\]

\noindent $\mathbf{Case}$ $|J'|=m-m_1$\quad 
It is easy to check that $G_{J'}=\CC^{n+1}$. Thus for convenience, we set
$t(J)=0$ in this case. 

Then for every subset $J\subseteq [m_2]$ with $|J|\le l$, we have 
\begin{equation}\label{eq::phi}
\varphi_i^J(\tilde{x})=\varphi_i^{J'}(\tilde{x})\cdot x_0,\quad
i=1,\cdots,t(J). 
\end{equation}
Now consider the SDP relaxations \cite{LasserreGlobal2001} for the following
polynomial optimization
\begin{equation}\label{pro::rfjac}
\left\{
\begin{aligned}
p^*:=\underset{\tilde{x}\in\RR^{n+1}}{\min}&\ \tilde{p}(\tilde{x})\\
\text{s.t.}&\ h^{hom}_1(\tilde{x})=\cdots=h^{hom}_{m_1}(\tilde{x})=
h^{hom}_{m_1+1}(\tilde{x})=0,\\
&\ \varphi_i^J(\tilde{x})=0, \varphi_j^{J'}(\tilde{x})=0 \\ 
&\ (i\in[len(J)], j\in[t(J)], J\subseteq
[m_2], |J|\le l),\\ &\ g^{hom}_{\nu}(\tilde{x})\ge 0, \forall
\nu\in\{0,1\}^{m_2+1},
\end{aligned}\right.
\end{equation}
where $g^{hom}_{\nu}=(g^{hom}_1)^{\nu_1}\cdots
(g^{hom}_{m_2+1})^{\nu_{m_2+1}}$.  We now show that for each $J\subseteq [m_2]$
with $|J|\le l$, constraints
$\varphi_1^J(\tilde{x})=\cdots=\varphi_{t(J)}^J(\tilde{x})=0$ can be removed
from (\ref{pro::rfjac}). Consider the $N$-th order SDP relaxation
(\ref{relax::primal}) for (\ref{pro::rfjac}).  By (\ref{eq::phi}) and the
properties of localizing moment matrices in \cite[Lemma 4.1]{Laurent_sumsof},
we have 
\[
L_{\varphi_j^{J'}}^{(N)}(y)=0\quad \text{implies}\quad
L_{\varphi_j^{J}}^{(N)}(y)=0,\quad j=1,\ldots,t(J),\ J\subseteq [m_2],\ |J|\le
l.
\]
In the dual problem (\ref{relax::dual}), by (\ref{eq::phi}), the truncated
ideal 
\begin{equation*}
\left\{
\sum\limits_{J\subseteq [m_2], |J|\le l}
\left(\sum\limits_{i=1}^{len(J)}\phi_i\varphi_i^J+
\sum\limits_{j=1}^{t(J)}\zeta_j\varphi_j^{J'}\right)
+\sum\limits_{k=1}^{m_1+1}\psi_kh^{hom}_k
\right\},\ \text{where}\
\end{equation*}
\[
\forall i,j,k,\ 
\deg{(\phi_i\varphi_i^J)}\le 2N, \deg{(\zeta_j\varphi_j^{J'})}\le 2N, 
\deg{(\psi_kh^{hom}_k)}\le 2N,
\]
agrees with 
\begin{equation}\label{def::I}
\left\{
\sum\limits_{J\subseteq [m_2], |J|\le l}
\left(\sum\limits_{i=t(J)+1}^{len(J)}\phi_i\varphi_i^J+
\sum\limits_{j=1}^{t(J)}\zeta_j\varphi_j^{J'}\right)
+\sum\limits_{k=1}^{m_1+1}\psi_kh^{hom}_k
\right\}\ \text{where}
\end{equation}
\[
\forall i,j,k,\ \deg{(\phi_i\varphi_i^J)}\le 2N, 
\deg{(\zeta_j\varphi_j^{J'})}\le 2N, 
\deg{(\psi_kh^{hom}_k)}\le 2N.
\]
Therefore, we can remove
$\varphi_1^J(\tilde{x})=\cdots=\varphi_{t(J)}^J(\tilde{x})=0$ in
(\ref{pro::rfjac}) and improve the numerical performance in practice. Hence we
consider the following optimization 
\begin{equation}\label{pro::rfjacReduced}
\left\{
\begin{aligned}
p^*:=\underset{\tilde{x}\in\RR^{n+1}}{\min}&\ \tilde{p}(\tilde{x})\\
\text{s.t.}&\ h^{hom}_1(\tilde{x})=\cdots=h^{hom}_{m_1}(\tilde{x})
=h^{hom}_{m_1+1}(\tilde{x})=0,\\ 
&\ \varphi_i^J(\tilde{x})=0, \varphi_j^{J'}(\tilde{x})=0 \\
&\ (i=t(J)+1,\ldots,len(J), j\in[t(J)], J\subseteq [m_2], |J|\le l),\\
&\ g^{hom}_{\nu}(\tilde{x})\ge 0, \forall \nu\in\{0,1\}^{m_2+1},
\end{aligned}\right.
\end{equation}
The $N$-th order SDP relaxation \cite{LasserreGlobal2001} for
(\ref{pro::rfjacReduced}) is the SDP
\begin{equation}\label{relax::rfprimal}
\left\{
\begin{aligned}
p_N^{(1)}:=\min&\ L_{\tilde{p}}(y)\\
\text{s.t.}&\ L_{h^{hom}_1}^{(N)}(y)=\cdots=L_{h^{hom}_{m_1}}^{(N)}(y)=
L_{h^{hom}_{m_1+1}}^{(N)}(y)=0,\\
&\ L_{\varphi_i^J}^{(N)}(y)=0, L_{\varphi_j^{J'}}^{(N)}(y)=0 \\
&\ (i=t(J)+1,\ldots,len(J), j\in[t(J)], J\subseteq [m_2], |J|\le l),\\
&\ L_{g^{hom}_{\nu}}^{(N)}\succeq 0, \forall \nu\in \{0,1\}^{m_2+1}, y_0=1.
\end{aligned}\right.
\end{equation}
The dual problem of (\ref{relax::rfprimal}) is 
\begin{equation}\label{relax::rfdual}
\begin{aligned}
p^{(2)}_N:=\underset{\gamma\in\RR^{n+1}}{\max}\ \gamma\quad 
\text{s.t.}\ \tilde{p}(\tilde{x})-\gamma\in I^{(N)}+P^{(N)}.
\end{aligned}
\end{equation}
where $I^{(N)}$ is the ideal defined in (\ref{def::I}) and 
\begin{equation*}
P^{(N)}=\left\{
\underset{\nu\in\{0,1\}^{m_2+1}}{\sum}\sigma_{\nu}g^{hom}_{\nu}\Bigg|
\begin{aligned}
&\deg{(\sigma_{\nu}g^{hom}_{\nu})}\le 2N\\
&\sigma_{\nu}\text{'s are SOS} 
\end{aligned}
\right\}.
\end{equation*}

\begin{define}
For every set $J=\{j_1,\ldots,j_k\}\subseteq [m_2+1]$ with $k\le n-m_1$,
let
\[
\Theta_J=\{\tilde{x}\in V(h^{hom},g^{hom}_J)\mid\text{rank}\
\left[\nabla_{\tilde{x}} h^{hom}\quad\nabla_{\tilde{x}}
g^{hom}_J\right]<m_1+|J|+1\}
\]
and 
\[\Theta=\bigcup\limits_{J\subseteq [m_2+1],\ |J|\le
n-m_1}\Theta_J.
\] 
\end{define} 

\begin{assumption}\label{assumprf}
\begin{inparaenum}[\upshape(i\upshape)]
\item $m_1\le n$;
\item For any $u\in \widetilde{S}$ in {\upshape(\ref{def::S})}, 
at most $n-m_1$ of $g^{hom}_1(u),\ldots,g^{hom}_{m_2+1}(u)$ vanish;
\item The set $\Theta$ is finite.
\end{inparaenum}
\end{assumption}
By Theorem \ref{thm::main} and \ref{thm::main::exact}, we have 
\begin{theorem}\label{thm::rf}
Suppose Assumption \ref{assumprf} holds. Then $p^*>-\infty$ in
{\upshape(\ref{pro::rfjacReduced})} and there exists $N^*\in{\mathbb N}$ such
that $p^{(1)}_N=p^{(2)}_N=p^*$ for all $N\ge N^*$.  Furthermore, if one of the
conditions in Theorem \ref{thm::main} holds and the minimum $s^*$ of
{\upshape(\ref{pro::homo})} is achievable, then $p^{(1)}_N=p^{(2)}_N=r^*$ for
all $N\ge N^*$.
\end{theorem}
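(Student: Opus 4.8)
The plan is to reduce Theorem~\ref{thm::rf} to the already-established convergence result Theorem~\ref{thm::main::exact} by recognizing \eqref{pro::rfjacReduced} as (a harmless reduction of) Nie's Jacobian reformulation \eqref{pro::jacobian} of the polynomial optimization \eqref{pro::homo}. First I would make this identification precise. Regard \eqref{pro::homo} as an instance of the generic problem \eqref{pro::opti} in the $n+1$ variables $\tilde{x}=(x_0,x)$, with the $m_1+1$ equality constraints $h^{hom}_1,\ldots,h^{hom}_{m_1},h^{hom}_{m_1+1}$ (where $h^{hom}_{m_1+1}=\tilde{q}-1$) and the $m_2+1$ inequality constraints $g^{hom}_1,\ldots,g^{hom}_{m_2},g^{hom}_{m_2+1}$ (where $g^{hom}_{m_2+1}=x_0$). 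Then $m=\min\{m_1+m_2+2,n\}$ from Section~\ref{sec::rf2} is exactly Nie's integer $\min\{(m_1+1)+(m_2+1),(n+1)-1\}$ for this instance; the subsets $J\subseteq[m_2]$ with $|J|\le l$ together with the $J'=J\cup\{m_2+1\}$ enumerate all subsets of $[m_2+1]$ of size $\le m-m_1-1$ relevant to \eqref{pro::jacobian} (the case $|J'|=m-m_1$ contributing nothing because $G_{J'}=\CC^{n+1}$ there); and the $\varphi^J_i,\varphi^{J'}_i$ constructed above are precisely the $\varphi$'s of \eqref{pro::jacobian} for this instance. Consequently Assumption~\ref{assumprf} is nothing but Assumption~\ref{assump} written out for \eqref{pro::homo}: condition (i) reads $m_1+1\le n+1$, condition (ii) allows at most $(n+1)-(m_1+1)=n-m_1$ of $g^{hom}_1(u),\ldots,g^{hom}_{m_2+1}(u)$ to vanish, and $\Theta$ is the corresponding exceptional determinantal set.

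Next I would dispose of the passage from \eqref{pro::rfjac} to the reduced system \eqref{pro::rfjacReduced}. By \eqref{eq::phi}, $\varphi^J_i=x_0\cdot\varphi^{J'}_i$ for $i=1,\ldots,t(J)$, so the equalities $\varphi^J_1=\cdots=\varphi^J_{t(J)}=0$ are implied by $\varphi^{J'}_1=\cdots=\varphi^{J'}_{t(J)}=0$; hence \eqref{pro::rfjac} and \eqref{pro::rfjacReduced} have the same feasible set, and $p^*$ equals the optimum $f^*$ of Nie's reformulation \eqref{pro::jacobian} applied to \eqref{pro::homo}. At the level of the order-$N$ relaxations, the same identity \eqref{eq::phi} combined with the localizing-matrix property that $L_{\varphi_j^{J'}}^{(N)}(y)=0$ forces $L_{\varphi_j^{J}}^{(N)}(y)=0$ (cf.\ \cite[Lemma~4.1]{Laurent_sumsof}) shows that $p^{(1)}_N$ equals the order-$N$ primal value of \eqref{pro::rfjac}, while the equality of truncated ideals recorded around \eqref{def::I} shows that $p^{(2)}_N$ equals its order-$N$ dual value. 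Thus it suffices to prove the theorem for Nie's reformulation of \eqref{pro::homo}.

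The first assertion is then immediate: since Assumption~\ref{assumprf} is Assumption~\ref{assump} for \eqref{pro::homo}, Theorem~\ref{thm::main::exact} gives $p^*=f^*>-\infty$ together with an $N^*$ such that $f^{(1)}_N=f^{(2)}_N=f^*$, hence $p^{(1)}_N=p^{(2)}_N=p^*$, for all $N\ge N^*$. For the second assertion, if one of the conditions of Theorem~\ref{thm::main} holds then $r^*=s^*$; and if the minimum $s^*$ of \eqref{pro::homo} is achievable, then $s^*$ is the ``$f_{min}$'' of the instance \eqref{pro::homo}, so by Lemma~\ref{lem::infimumeq} (whose hypotheses (i),(ii) are contained in Assumption~\ref{assumprf}) we have $p^*=f^*=s^*$, whence $p^*=r^*$; equivalently, the ``furthermore'' clause of Theorem~\ref{thm::main::exact} directly yields $p^{(1)}_N=p^{(2)}_N=s^*=r^*$ for all $N\ge N^*$.

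The main obstacle is not a new idea but the bookkeeping of the first two paragraphs: one must check that the partition into the $J$'s and $J'$'s in Section~\ref{sec::rf2} reproduces exactly the family of $\varphi$'s prescribed by \eqref{pro::jacobian} for the homogenized problem --- including the edge cases $|J'|=m-m_1$ where no polynomial is added and $n<m_1+m_2+2$ where $m=n$ caps the cardinalities --- and that deleting the redundant constraints $\varphi^J_1,\ldots,\varphi^J_{t(J)}$ alters neither the feasible set nor the primal and dual relaxation values. Once this matching is confirmed, Theorem~\ref{thm::rf} follows at once from Theorem~\ref{thm::main::exact} and Theorem~\ref{thm::main}.
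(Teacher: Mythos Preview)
Your proposal is correct and follows the same route as the paper: the authors likewise deduce Theorem~\ref{thm::rf} directly from Theorem~\ref{thm::main} and Theorem~\ref{thm::main::exact}, with the bookkeeping you spell out (the identification of Assumption~\ref{assumprf} with Assumption~\ref{assump} for \eqref{pro::homo} and the harmlessness of dropping the redundant $\varphi^J_i$'s via \eqref{eq::phi} and \eqref{def::I}) already recorded in the discussion preceding the theorem. Your write-up simply makes explicit what the paper leaves implicit.
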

\begin{cor}
If $S=\RR^n$ in {\upshape(\ref{pro::rf})} and $s^*$ is achievable in
{\upshape(\ref{pro::homo::global})}, then there exists $N^*\in{\mathbb N}$ such
that $p^{(1)}_N=p^{(2)}_N=r^*$ for all $N\ge N^*$ in
{\upshape(\ref{relax::rfprimal})} and {\upshape(\ref{relax::rfdual})}.
\end{cor}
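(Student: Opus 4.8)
The plan is to obtain this as a specialization of Theorem \ref{thm::rf}: once we verify that Assumption \ref{assumprf} holds for (\ref{pro::homo::global}) and that the hypotheses of the ``furthermore'' part of Theorem \ref{thm::rf} are met, the conclusion is immediate. First I would record the structural simplifications when $S=\RR^n$. In this case $m_1=m_2=0$ in (\ref{pro::rf}), and by the Remark following Corollary \ref{cor::eq} the homogenization reduces to (\ref{pro::homo::global}), which carries the single equality constraint $h^{hom}_1(\tilde x):=\tilde q(\tilde x)-1=0$ and no inequality constraints; hence in the setup of Section \ref{sec::rf2} the only new polynomials in (\ref{list::varphi}) are the $2\times 2$ minors of $[\nabla_{\tilde x}\tilde p\ \ \nabla_{\tilde x}\tilde q]$. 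We may assume $p/q$ is nonconstant (the constant case being trivial), so $d=\max\{\deg p,\deg q\}\ge 1$.

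Next I would check Assumption \ref{assumprf}. Condition (i) is trivial since $m_1=0$, and condition (ii) is vacuous as (\ref{pro::homo::global}) has no inequality constraints. The only point requiring an argument is condition (iii), where $\Theta$ collapses to
\[
\Theta=\Theta_\emptyset=\{\tilde x\in\CC^{n+1}\mid \tilde q(\tilde x)-1=0,\ \nabla_{\tilde x}\tilde q(\tilde x)=0\}.
\]
Since $\tilde q$ is homogeneous of degree $d\ge 1$, Euler's identity $\sum_{i=0}^n x_i\,\partial_{x_i}\tilde q=d\,\tilde q$ shows that $\nabla_{\tilde x}\tilde q=0$ forces $\tilde q=0$, contradicting $\tilde q=1$; therefore $\Theta=\emptyset$, which is certainly finite. (If one prefers to retain the redundant constraint $x_0\ge 0$ inside the general framework of Section \ref{sec::rf2}, the only extra set to examine is $\Theta_{\{x_0\}}$, on which $\nabla_{\tilde x}\tilde q$ would be parallel to $(1,0,\dots,0)$ at a point with $x_0=0$; since $\tilde q(0,x)$ is either identically zero or homogeneous of degree $d$ in $x$, the same Euler argument again makes this set empty.) Thus Assumption \ref{assumprf} holds, and Theorem \ref{thm::rf} gives $p^*>-\infty$ together with an $N^*\in{\mathbb N}$ such that $p^{(1)}_N=p^{(2)}_N=p^*$ for all $N\ge N^*$.

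Finally I would upgrade $p^*$ to $r^*$. By Corollary \ref{cor::eq}, $S=\RR^n$ is closed at $\infty$, so condition (a) of Theorem \ref{thm::main} holds and $r^*=s^*$; combined with the standing hypothesis that $s^*$ is achievable in (\ref{pro::homo::global}), the ``furthermore'' clause of Theorem \ref{thm::rf} yields $p^{(1)}_N=p^{(2)}_N=r^*$ for all $N\ge N^*$, which is the assertion. The proof is, in effect, a verification that the hypotheses of an already-established theorem are automatically satisfied here; the single substantive step, condition (iii) of Assumption \ref{assumprf}, is handled outright by Euler's identity, so I anticipate no real obstacle beyond keeping careful track that the Section \ref{sec::rf2} construction, originally phrased for (\ref{pro::homo}), is being applied to (\ref{pro::homo::global}) after the harmless removal of the constraint $x_0\ge 0$.
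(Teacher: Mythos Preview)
Your proposal is correct and follows essentially the same route as the paper: verify Assumption \ref{assumprf} via Euler's identity to show $\Theta=\emptyset$, invoke Corollary \ref{cor::eq} for closedness at $\infty$, and then apply Theorem \ref{thm::rf}. Your additional care about the degenerate case $d=0$ and about whether the constraint $x_0\ge 0$ is retained is not in the paper's proof but is harmless extra bookkeeping.
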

\begin{proof}
Since $\tilde{q}$ is homogeneous, regarding $\nabla\tilde{q}$ and $\tilde{x}$
as vectors in $\RR^{n+1}$, then $d\cdot\tilde{q}=\nabla\tilde{q}^T\cdot
\tilde{x}$ by Euler's Formula. Thus $\nabla(\tilde{q}-1)=\nabla\tilde{q}=0$
implies $\tilde{q}=0$, i.e. $\Theta=\emptyset$. Hence, Assumption \ref{assumprf} 
is always true for $(\ref{pro::homo::global})$. Then by Corollary \ref{cor::eq} 
and Theorem \ref{thm::rf}, the conclusion follows.
\end{proof}
In the end of this subsection, we would like to point out that $s^*$ in
(\ref{pro::homo}) might not be achievable in some cases. If the infimum of a
constrained polynomial optimization is asymptotic value, some approaches are
proposed in \cite{GGSZ2011,Vui-constraints}. Hence, we can still use these
approaches to solve (\ref{pro::homo}). However, to the best knowledge of the
authors, the finite convergence for these methods is unknown.

\subsection{Numerical Experiments}\label{sec::ex}
In this subsection, we present some numerical examples to illustrate the
efficiency of our method for solving minimization of (\ref{pro::rf}).  We use
the software {\itshape GloptiPoly} \cite{gloptipoly} to solve
(\ref{relax::rfprimal}) and (\ref{relax::rfdual}).
\subsubsection{Unconstrained rational optimization}
In the following, Example \ref{ex::m1} and \ref{ex::commonzeros} are
constructed from the {\itshape Motzkin} polynomial
\begin{equation}\label{eq::motzkin}
M(x_1,x_2,x_3)=x_1^4x_2^2+x_1^2x_2^4+x_3^6-3x_1^2x_2^2x_3^2.
\end{equation} 
As is well-known, $M(x_1,x_2,x_3)$ is nonnegative on $\RR^3$ but not SOS 
\cite{Reznick96someconcrete}.
\begin{example}\cite[Example 2.9]{Nie2008}\label{ex::m1}
{\upshape
Consider minimization
\begin{equation}\label{ex::I}
\min\limits_{x_1,x_2\in\RR} r(x_1,x_2):=\frac{x_1^4x_2^2+x_1^2x_2^4+1}{x_1^2x_2^2}.
\end{equation}
Taking $x_3=1$ in Motzkin polynomial, we have
$x_1^4x_2^2+x_1^2x_2^4+1-3x_1^2x_2^2\ge 0$ on $\RR^2$. Since $r(1,1)=3$, we
have $r^*=3$ and there are four minimizers $(\pm 1,\pm 1)$.  However,
$x_1^4x_2^2+x_1^2x_2^4+1-r^* x_1^2x_2^2$ is not SOS.  To solve this problem,
the authors in \cite{Nie2008} used the generalized big ball technique.  More
specifically, it is assumed that one of the minimizers of {\upshape(\ref{ex::I})} lies in
a ball $B(c,\rho)$ and the numerator and denominator of $r(x_1,x_2)$ have no
common real roots on $B(c,\rho)$. However, it is not easy in general to
determine the radius $\rho$ of this ball. We now solve this problem using our
method without the assumptions in \cite{Nie2008}.

We first
reformulate the problem as the following polynomial optimization:
\begin{equation*}
\left\{
 \begin{aligned}
\underset{x_0,x_1,x_2\in\RR}{\min}&\ \tilde{p}(x_0,x_1,x_2): = x_1^4x_2^2+x_1^2x_2^4+x_0^6\\
\text{s.t.}&\ \tilde{q}(x_0,x_1,x_2) := x_1^2x_2^2x_0^2=1.
\end{aligned}\right.
\end{equation*}
By Jacobian SDP relaxation (\ref{pro::rfjacReduced}), we need 3 more equations:
\begin{equation*}
\begin{aligned}
&\varphi_1(x_0,x_1,x_2) = 4x_1^3x_2^3x_0^2(x_1^2-x_2^2)=0\\
&\varphi_2(x_0,x_1,x_2) =4x_1x_2^2x_0(2x_1^4x_2^2+x_1^2x_2^4-3x_0^6)=0\\
&\varphi_3(x_0,x_1,x_2) =4x_1^2x_2x_0(x_1^4x_2^2+2x_1^2x_2^4-3x_0^6)=0
\end{aligned}
\end{equation*}
By the condition $x_1^2x_2^2x_0^2=1$, the above three
equations can be simplified as
\begin{equation*}
\begin{aligned}
&\varphi_1(x_0,x_1,x_2) = x_1^2-x_2^2=0\\
&\varphi_2(x_0,x_1,x_2) = 2x_1^4x_2^2+x_1^2x_2^4-3x_0^6=0\\
&\varphi_3(x_0,x_1,x_2) = x_1^4x_2^2+2x_1^2x_2^4-3x_0^6=0
\end{aligned}
\end{equation*}
We need to solve the following new problem
\begin{equation*}
\left\{
 \begin{aligned}
\underset{x_0,x_1,x_2\in\RR}{\min}&\ x_1^4x_2^2+x_1^2x_2^4+x_0^6\\
\text{s.t.}&\ x_1^2x_2^2x_0^2-1=0,\ 2x_1^4x_2^2+x_1^2x_2^4-3x_0^6=0,\\
&\ x_1^2-x_2^2=0,\ x_1^4x_2^2+2x_1^2x_2^4-3x_0^6 =0.
\end{aligned}\right.
\end{equation*}
Using GloptiPoly to solve this problem, we get the following results:
\begin{itemize}
\item $N=3$. The optimum is 3, but extracting global optimal solutions fails.
\item $N=4$. We get 8 optimal solutions for $(x_0,x_1,x_2)$: $(\pm 1,\pm 1,\pm
1)$ from which we get all the optimal solutions for original
problem: $(\pm1,\pm1)$. \hfill $\square$
\end{itemize}}  
\end{example}

\begin{example}\cite[Example 2.10]{Nie2008}\label{ex::commonzeros}
{\upshape
Consider the following problem
\begin{equation}\label{ex::1}
\underset{x_1,x_2\in\RR}{\min}\ r(x_1,x_2) := \frac{p(x_1,x_2)}{q(x_1,x_2)}
=\frac{x_1^4+x_1^2+x_2^6}{x_1^2x_2^2}.
\end{equation}
Taking $x_2=1$ in Motzkin polynomial {\upshape(\ref{eq::motzkin})}, we have $r^*=3$ with
$4$ minimizers $(\pm 1,\pm 1)$.  The denominator and numerator have real common
root $(0,0)$. In {\upshape\cite{Nie2008}}, the SOS relaxation
extracts $6$ solutions, $2$ of which are not global minimizers but the common
roots of $p(x)$ and $q(x)$. We reformulate it as the following polynomial
optimization and solve it by Jacobian SDP relaxation.
\begin{equation}
\left\{
 \begin{aligned}\label{ex::2}
\underset{x_0,x_1,x_2\in\RR}{\min}&\ \tilde{p}(x_0,x_1,x_2):= x_1^4x_0^2+x_1^2x_0^4+x_2^6\\
\text{s.t.}&\ \tilde{q}(x_0,x_1,x_2) :=   x_1^2x_2^2x_0^2=1.
\end{aligned}\right.
\end{equation}
Using GloptiPoly, we can still extract 8 solutions of {\upshape(\ref{ex::2})} and obtain
all the 4 optimal solutions of {\upshape(\ref{ex::1})} as in Example \ref{ex::m1}. In
our method, the constraint $\tilde{q}(x_0,x_1,x_2)=1$ prevents extracting the
common real roots of $p(x)$ and $q(x)$. This example also shows that Condition
{\upshape(\ref{item::2})} in Theorem \ref{thm::main2} is only sufficient but not
necessary. }\hfill $\square$ 
\end{example}

The following example is generated from the {\itshape Robinson} polynomial
\[
x^6_1+x^6_2+x_3^6-(x^4_1x_2^2+x_1^2x_2^4+x_1^4x_3^2+x_1^2x_3^4+x_2^4x_3^2+x_2^2x_3^4)+3x^2_1x^2_2x_3^2
\]
which is nonnegative on $\RR^3$ but not SOS \cite{Reznick96someconcrete}.
\begin{example}\label{ex::robinson}
{\upshape
Consider the following problem
\begin{equation}\label{exm:rob:0}
\begin{aligned}
\min\limits_{x_1,x_2\in\RR}&\ r(x_1,x_2):=\frac{p(x_1,x_2)}{q(x_1,x_2)} =
\frac{x^6_1+x^6_2+3x^2_1x^2_2+1}{x^2_1(x^4_2+1)+x^2_2(x^4_1+1)+(x^4_1+x^4_2)}.\\
\end{aligned}
\end{equation}

Taking $x_3=1$ in Robinson polynomial, we have $p(x_1,x_2 )-q(x_1,x_2)\geq 0$
on $\RR^2$. Since $r(1,1)=1$, $r^*=1$. We reformulate it as the following 
optimization: 
\begin{equation}
\left\{
\begin{aligned}\label{exm:rob:1}
\min\limits_{x_0,x_1,x_2\in\RR}&\ x^6_1+x^6_2+3x^2_1x^2_2x^2_0+x^6_0 \\
\text{s.t.}&\ x^2_1(x^4_2+x^4_0)+x^2_2(x^4_1+x^4_0)+x^2_0(x^4_1+x^4_2)=1.\\
\end{aligned}\right.
\end{equation}
The numerical results we obtained are:
\begin{itemize}
\item For relaxation order $N=5,6$, 
we get the optimum $s^*=1$, but the minimizers can not be extracted. 
\item For relaxation order $N=7$, we  
extract 20 approximate minimizers of {\upshape(\ref{exm:rob:1})}: 
\begin{equation*}
\begin{aligned}
&(-0.0000,\pm 0.8909,\pm 0.8909),&\quad & (\pm 0.8909,\pm 0.8909,-0.0000),\\
&(\pm 0.8909,-0.0000, \pm 0.8909),&\quad & (\pm 0.7418,\pm 0.7418,\pm 0.7418).
\end{aligned}
\end{equation*}
\end{itemize}
The above solutions correspond to the exact minimizers of (\ref{exm:rob:1}): 
\begin{equation*}
\begin{aligned}
&(0,\pm\frac{1}{\sqrt[6]{2}},\pm\frac{1}{\sqrt[6]{2}}),\quad 
(\pm\frac{1}{\sqrt[6]{2}},\pm\frac{1}{\sqrt[6]{2}},0),\\ 
&(\pm\frac{1}{\sqrt[6]{2}},0,\pm\frac{1}{\sqrt[6]{2}}),\quad 
(\pm\frac{1}{\sqrt[6]{6}},\pm\frac{1}{\sqrt[6]{6}}, \pm\frac{1}{\sqrt[6]{6}}).
\end{aligned}
\end{equation*}
There are four solutions with the first coordinate $x^*_0=0$ which indicate
that minimum $r^*=1$ is also an asymptotic value at $\infty$ by Theorem
\ref{thm::main2}. In fact, 
\begin{equation*}
\lim\limits_{x_1,x_2\rightarrow\infty}\frac{p(x_1,x_2)}{q(x_1,x_2)}=1=r^*.
\end{equation*} 
From the other 16 solutions, according to {\upshape(\ref{item::1})} in Theorem
\ref{thm::main2}, we get $8$ global minimizers of {\upshape(\ref{exm:rob:0})}:
$(\pm1,\pm1)$, $(\pm1,0)$, $(0,\pm1)$. \hfill $\square$
}
\end{example}

\begin{example}\cite[Example 3.4]{Nie2008}
{\upshape
Suppose function $\psi(z)$ and $\phi(z)$ are monic complex univariate
polynomials of degree $m$ such that:
$$\psi(z) =z^m+\psi_{m-1}z^{m-1}+\cdots +\psi_1z+\psi_0$$
$$\phi(z) =z^m+\phi_{m-1}z^{m-1}+\cdots +\phi_1z+\phi_0$$
It is shown in \cite{KarmarkarLakshmanGCDS} that finding nearest GCDs
becomes the following global minimization of rational functions 
\begin{equation}\label{gcd:exm}
\min\limits_{x_1,x_2\in\mathbb{R}}\ 
\frac{p(x_1,x_2)}{q(x_1,x_2)}=\frac{|\psi(x_1+ix_2)|^2+|\phi(x_1+ix_2)|^2}
{\sum\limits_{k=0}^{m-1}(x^2_1+x^2_2)^k}
\end{equation}
where $\deg(p) = 2m$ and $\deg(q) = 2(m-1)$. Let 
\[
\psi(z) = z^3+z^2-2,\quad \phi(z)= z^3+1.5z^2+1.5z-1.25.
\] 
Using our method for relaxation order $N=5$, we get four optimal solutions of
the  optimization reformulated from {\upshape(\ref{gcd:exm})} by
homogenization: 
\[
(0.7050,-0.7073,\pm 0.7763),\quad (-0.7050,0.7073,\pm 0.7763).
\] 
The corresponding minimizers of {\upshape(\ref{gcd:exm})} are
\[
(x_1\approx -1.0033,x_2\approx \pm 1.1011),\quad 
(x_1\approx  -1.0033,x_2\approx\pm 1.1011)
\] 
which are the same as in {\upshape\cite{Nie2008}}. 
The minimum is $r^* \approx 0.0643$. \hfill$\square$ }
\end{example}

\subsubsection{Constrained rational optimization}

We now give some numerical examples of minimizing of rational functions with
polynomial inequality constraints.  We first consider an example for which
$p(x)$ and $q(x)$ have common roots.
\begin{example}\cite{Nie2008}\label{ex::commonzeros2} 
{\upshape
Consider the following optimization
\begin{equation}
\begin{aligned}\label{exm:con:case:3}
\min\limits_{x\in\RR}\ r(x):=\frac{1+x}{(1-x^2)^2}\quad \text{s.t.}\ (1-x^2)^3\geq 0.
\end{aligned}
\end{equation}
As shown in \cite{Nie2008}, the global minimum $r^* =\frac{27}{32}\approx
0.8438$ and the minimizer $x^*=-\frac{1}{3}\approx-0.3333$. If the denominator
and numerator have common roots, SOS relaxation method proposed in
\cite{Nie2008} can not guarantee to converge to the minimum. 

Reformulating the above problem by homogenization, we get
\begin{equation}
\left\{
\begin{aligned}\label{exm:con:case:4}
\min\limits_{x_0,x_1\in\RR}&\ x^4_0+x_1x^3_0 \\
\text{s.t.}&\ x^4_0-2x_1^2x^2_0+x_1^4=1,\\
&\ x^6_0-3x^4_0x_1^2+3x^2_0x_1^4-x_1^6\geq 0,\ x_0\geq 0.
\end{aligned}\right.
\end{equation}
For relaxation order $N=7$, by the Jacobian SDP relaxation, we get the optimal
solution of {\upshape(\ref{exm:con:case:4})} $\tilde{x}^*\approx ( 1.0607,-0.3536)$ and the
minimum $s^*\approx 0.8437$. According to {\upshape(\ref{item::1})} in Theorem
\ref{thm::main2}, we find the minimizer of {\upshape(\ref{exm:con:case:3})}:\ $x^*\approx -0.3334$.
\hfill $\square$
}
\end{example}

We next consider Example \ref{ex::robinson} with some constraints. 
\begin{example}
{\upshape
Consider optimization
\begin{equation}\label{ex::robinson::cons}
\begin{aligned}
r^*:=\min\limits_{x\in S}&\ \frac{p(x_1,x_2)}{q(x_1,x_2)} =
\frac{x^6_1+x^6_2+3x^2_1x^2_2+1}{x^2_1(x^4_2+1)+x^2_2(x^4_1+1)+(x^4_1+x^4_2)}.\\
\end{aligned}
\end{equation}
\begin{enumerate}[\upshape (a)]
\item $S=\{(x_1,x_2)\in\RR^2\mid x^2_1+x^2_2\leq 1\}$. 
It is easy to check that $S$ is closed at $\infty$. By Theorem \ref{thm::main}, 
$r^*$ is equal to the optimum of the following optimization:
\begin{equation*}
\left\{
\begin{aligned}
s^*=\min\limits_{x_0,x_1,x_2\in\RR}&\ x^6_1+x^6_2+3x^2_1x^2_2x^2_0+x^6_0 \\
\text{s.t.}&\ x^2_1(x^4_2+x^4_0)+x^2_2(x^4_1+x^4_0)+x^2_0(x^4_1+x^4_2)=1,\\
&\ x^2_0-x^2_1-x^2_2\geq 0,\ x_0\geq 0.
\end{aligned}\right.
\end{equation*}
For relaxation order $N=7$, we get $r^*=s^*=1$ with 4 approximate minimizers:
\begin{equation*}
\begin{aligned}
&(0.8909,\pm 0.8909,-0.0000),&\quad (0.8909,-0.0000,\pm 0.8909),\\
\end{aligned}
\end{equation*}
which correspond to the exact minimizers: 
\[
(\frac{1}{\sqrt[6]{2}},\pm\frac{1}{\sqrt[6]{2}},0),\quad
(\frac{1}{\sqrt[6]{2}},0,\pm\frac{1}{\sqrt[6]{2}}).
\]
Then we get four minimizers of {\upshape(\ref{ex::robinson::cons})}: $(\pm1,0)$, $(0,\pm1)$.

\item $S=B(0,\sqrt{2})^{c}=\{(x_1,x_2)\in\RR^2\mid x^2_1+x^2_2\geq 2\}$.
$S$ is noncompact but closed at $\infty$. By Theorem \ref{thm::main}, we solve
the following equivalent optimization:
\begin{equation*}
\left\{
 \begin{aligned}
s^*:=\min\limits_{x_0,x_1,x_2\in\RR}&\ x^6_1+x^6_2+3x^2_1x^2_2x^2_0+x^6_0 \\
\text{s.t.}&\ x^2_1(x^4_2+x^4_0)+x^2_2(x^4_1+x^4_0)+x^2_0(x^4_1+x^4_2)=1,\\
&\ x^2_1+x^2_2- 2x^2_0\geq 0,\ x_0\geq 0.
\end{aligned}\right.
\end{equation*}
For relaxation order $N=7$, we get $r^*=s^*=1$ with $8$ approximate minimizers: 
\begin{equation*}
\begin{aligned}
&(0.0002,\pm 0.8909,\pm 0.8909),&\quad (0.7418,\pm 0.7419,\pm 0.7419),\\
\end{aligned}
\end{equation*}
which correspond to the exact minimizers:
\[
(0,\pm\frac{1}{\sqrt[6]{2}},\pm\frac{1}{\sqrt[6]{2}}),\quad
(\frac{1}{\sqrt[6]{6}},\pm\frac{1}{\sqrt[6]{6}},\pm\frac{1}{\sqrt[6]{6}}).
\]
The former solutions indicate that $r^*=1$ is also an asymptotic values at
$\infty$.  From the latter solutions, we get four  minimizers of
{\upshape(\ref{ex::robinson::cons})}: $(\pm1,\pm1)$.  \hfill $\square$
\end{enumerate} }
\end{example}

\vspace{10pt}
\noindent{\bfseries Acknowledgments:}
The research was partially supported by NSF grant DMS-0844775 and Research Fund for Doctoral Program of Higher Education of China grant 20114301120001.

\def\refname{\Large\bfseries References}
\bibliographystyle{plain}
\bibliography{fguo}

\end{document}